\let\origsection=\section \def\section{\@ifstar{\origsection*}{\mysection}}
\def\mysection{\@startsection{section}{1}\z@{.7\linespacing\@plus\linespacing}{.5\linespacing}{\normalfont\scshape\centering\S}}
\renewcommand{\PrintDOI}[1]{\doi{#1}}
\numberwithin{equation}{section}
\def\rmlabel{\upshape({\itshape \roman*\,})}
\let\polishlcross=\l
\def\l{\ifmmode\ell\else\polishlcross\fi}
\let\emptyset=\varnothing
\let\setminus=\smallsetminus
\def\moverlay{\mathpalette\mov@rlay}
\def\mov@rlay#1#2{\leavevmode\vtop{   \baselineskip\z@skip \lineskiplimit-\maxdimen
   \ialign{\hfil$\m@th#1##$\hfil\cr#2\crcr}}}
\newcommand{\charfusion}[3][\mathord]{
    #1{\ifx#1\mathop\vphantom{#2}\fi
        \mathpalette\mov@rlay{#2\Cr#3}
      }
    \ifx#1\mathop\expandafter\displaylimits\fi}
\DeclareFontFamily{U}  {MnSymbolC}{}
\DeclareSymbolFont{MnSyC}         {U}  {MnSymbolC}{m}{n}
\DeclareFontShape{U}{MnSymbolC}{m}{n}{
    <-6>  MnSymbolC5
   <6-7>  MnSymbolC6
   <7-8>  MnSymbolC7
   <8-9>  MnSymbolC8
   <9-10> MnSymbolC9
  <10-12> MnSymbolC10
  <12->   MnSymbolC12}{}
\DeclareMathSymbol{\powerset}{\mathord}{MnSyC}{180}
\def\namedlabel#1#2{\begingroup
    #2%
    \def\@currentlabel{#2}%
    \phantomsection\label{#1}\endgroup
}
\newtheorem{theorem}             {Theorem}[section]
\newtheorem{lemma}      [theorem] {Lemma}
\newtheorem{observation} [theorem] {Observation}
\newtheorem{property}   [theorem] {Property}
\newtheorem{definition} [theorem] {Definition}
\newtheorem{proposition}[theorem] {Proposition}
\newtheorem{corollary}  [theorem] {Corollary}
\newtheorem*{claim}{Claim}
\let\theta=\vartheta
\let\rho=\varrho
\let\phi=\varphi
\newcommand{\mr}{\xrightarrow{\text{\rm mr}}}
\newcommand{\mF}{m_{\operatorname{F}}}
\newcommand{\rc}{\operatorname{r_c}}
\newcommand{\Q}{\operatorname{\mathcal{Q}}}
\newcommand{\cP}{\operatorname{\mathcal{P}}}
\newcommand{\PP}{\operatorname{\mathbb{P}}}
\newcommand*\linenomathpatch[1]{%
   \expandafter\pretocmd\csname #1\endcsname {\linenomath}{}{}%
   \expandafter\pretocmd\csname #1*\endcsname{\linenomath}{}{}%
   \expandafter\apptocmd\csname end#1\endcsname {\endlinenomath}{}{}%
   \expandafter\apptocmd\csname end#1*\endcsname{\endlinenomath}{}{}%
 }
\newcommand*\linenomathpatchAMS[1]{%
    \expandafter\pretocmd\csname #1\endcsname {\linenomathAMS}{}{}%
    \expandafter\pretocmd\csname #1*\endcsname{\linenomathAMS}{}{}%
    \expandafter\apptocmd\csname end#1\endcsname {\endlinenomath}{}{}%
    \expandafter\apptocmd\csname end#1*\endcsname{\endlinenomath}{}{}%
}
\let\linenomathAMS\linenomathWithnumbers
\patchcmd\linenomathAMS{\advance\postdisplaypenalty\linenopenalty}{}{}{}
\let\linenomathAMS\linenomathNonumbers
\newcommand{\definetitlefootnote}[1]{%
  \newcommand\addtitlefootnote{%
    \makebox[0pt][l]{$^{*}$}%
    \footnote{\protect\@titlefootnotetext}
  }%
  \newcommand\@titlefootnotetext{\spaceskip=\z@skip $^{*}$#1}%
}
\begin{document}
\onehalfspace
\shortdate
\yyyymmdddate
\settimeformat{ampmtime}

\definetitlefootnote{An extended abstract of this
  work~\cite{CoKoMoMo21+} has appeared in the proceedings of LAGOS
  2021.}

\title[Constrained Ramsey property]{The threshold for the constrained Ramsey property\addtitlefootnote}

\author[M.~Collares]{Maur\'icio Collares}
\address{Departamento de Matem\'atica, Universidade Federal de Minas Gerais, Belo Horizonte, MG, Brazil}
\email{mauricio@collares.org}

\author[Y.~Kohayakawa]{Yoshiharu Kohayakawa}
\address{Instituto de Matem\'atica e Estat\'{\i}stica, Universidade de
  S\~ao Paulo, Rua do Mat\~ao 1010, 05508-090 S\~ao Paulo, Brazil}
\email{yoshi@ime.usp.br}

\author[C. G.~Moreira]{Carlos Gustavo Moreira}
\address{School of Mathematical Sciences, Nankai University, Tianjin
  300071, P. R. China \& IMPA, Estrada Dona Castorina 110, Jardim Bot\^anico, Rio de Janeiro, RJ, Brazil}
\email{gugu@impa.br}

\author[G. O.~Mota]{Guilherme Oliveira Mota}
\address{Instituto de Matem\'atica e Estat\'{\i}stica, Universidade de
  S\~ao Paulo, Rua do Mat\~ao 1010, 05508-090 S\~ao Paulo, Brazil}
\email{mota@ime.usp.br}

\thanks{\rule[-.2\baselineskip]{0pt}{\baselineskip}%
  M.~Collares was partially supported by CNPq (406248/2021-4).
  Y.~Kohayakawa was partially supported by CNPq
  (311412/2018-1, 423833/2018-9, 406248/2021-4) and FAPESP (2018/04876-1,
  2019/13364-7).
  C. G. Moreira was partially supported by CNPq and FAPERJ.
  G.~O.~Mota was partially supported by CNPq (306620/2020-0,
  406248/2021-4) and FAPESP (2018/04876-1, 2019/13364-7).
  This study was financed in part by CAPES, Coordenação de
  Aperfeiçoamento de Pessoal de Nível Superior, Brazil, Finance Code~001.
  FAPESP is the S\~ao Paulo Research Foundation.  CNPq is the National
  Council for Scientific and Technological Development of
  Brazil.%
}

\begin{abstract}
  Given graphs $G$, $H_1$, and $H_2$, let $G\mr (H_1,H_2)$ denote the
  property that in every edge colouring of $G$ there is a
  monochromatic copy of $H_1$ or a rainbow copy of $H_2$. The
  \emph{constrained Ramsey number}, defined as the minimum $n$ such
  that $K_n\mr (H_1,H_2)$, exists if and only if $H_1$ is a star or
  $H_2$ is a forest.  We determine the threshold for the property
  $G(n,p)\mr (H_1,H_2)$ when $H_2$ is a forest, explicitly when the
  threshold is $\Omega(n^{-1})$ and implicitly otherwise.
\end{abstract}

\maketitle

\section{Introduction}
\label{sec:intro}
Given graphs $G$, $H_1$, and $H_2$, we write $G\mr (H_1,H_2)$ if in
every colouring of $E(G)$, with no restriction on the number of used
colours, there is a \emph{monochromatic} copy of $H_1$ or a
\emph{rainbow} copy of $H_2$, that is, a copy of~$H_1$ with all edges
having the same colour or a copy of $H_2$ with no two edges of the
same colour.  We investigate the property $G\mr (H_1,H_2)$ when $G$ is
the binomial random graph~$G(n,p)$.

We say a function $\hat{p} \colon \mathbb{N} \to [0,1]$ is a \emph{threshold
  function} for a property of graphs~$\mathcal{P}$ if
$\lim_{n \to \infty} \mathbb{P} [G(n,p) \in \mathcal{P}]=1$ for
$p \gg \hat{p}$, and
$\lim_{n \to \infty} \mathbb{P}[G(n,p) \in \mathcal{P}]=0$ for
$p \ll \hat{p}$.  We say that any~$p'=\Theta(\hat p)$ is `the
threshold' for~$\mathcal P$.  We say that a property~$\cP$ \emph{holds
  with high probability} for~$G(n,p)$
if~$\lim_{n\to\infty}\PP[G(n,p)\in\cP]=1$.  Thus, to show
that~$\hat p$ is the threshold for~$\cP$, we have to prove the
so-called \emph{$1$-statement}, i.e., that if~$p\gg\hat p$, then~$\cP$
holds with high probability, as well as the \emph{$0$-statement},
i.e., that if $p\ll\hat p$, then the complement of~$\cP$ holds with
high probability.

The \emph{constrained Ramsey number}~$\rc(H_1,H_2)$, sometimes called
the \emph{rainbow Ramsey number}, is defined as the minimum $n$ such
that $K_n\mr (H_1,H_2)$.  It is known (see, e.g.,~\cite{JJL}) that the
number $\rc(H_1,H_2)$ exists if and only if $H_1$ is a star or $H_2$
is a forest.  Therefore, assuming that $H_1$ is a star or $H_2$ is a
forest, we see that the property $G(n,p)\mr (H_1,H_2)$ admits a
threshold function because it is a non-trivial increasing
property~\cite{BT}.  Given graphs~$H_1$ and~$H_2$ for which
$\rc(H_1,H_2)$ exists, we denote by $\hat p(H_1,H_2)$ the threshold
for the property $G(n,p)\mr (H_1,H_2)$.

In this paper, we determine~$\hat p(H_1,H_2)$ when $H_2$ is a forest,
explicitly in most cases and implicitly in all the remaining cases.
Before we state our results, we recall the definition of some
graph densities.  For a graph~$H$, let~$v(H)$ and~$e(H)$ be the number
of vertices and edges in~$H$.
The \textit{maximum $2$-density} of~$H$, denoted by $m_2(H)$, is
defined as $m_2(K_1)=m_2(2K_1)=0$ and $m_2(K_2) = 1/2$ for graphs
with at most two vertices; if $v(H)\geq 3$, then we define it as
\[
  m_2(H)=\max\left\{\frac{e(J)-1}{v(J)-2} \;:\; J\subset
    H,\;v(J)\geq 3\right\}.
\]
The \emph{maximum density} of a graph~$H$ with~$v(H)\geq1$ is denoted by
\begin{equation*}
  m(H)=\max\left\{\,\frac{e(J)}{v(J)} \;:\; J\subset
    H,\;v(J)\geq1\,\right\}.
\end{equation*}

We now discuss the threshold~$\hat p(H_1,H_2)$ of the property $G\mr
(H_1,H_2)$, \emph{always assuming that~$H_2$ is a forest} (we briefly
discuss the other case in which~$\rc(H_1,H_2)$ exists, namely
when~$H_1$ is a star and~$H_2$ is not a forest, at the end of this
introduction).  Below, we introduce the main cases, which will be
proved in Theorem~\ref{thm:main}. The remaining cases will be
discussed after stating Theorem~\ref{thm:main}, and will be summarised
in Proposition~\ref{prop:main}.

From the celebrated result of R\"odl and
Ruci\'nski~\cite{rodl93:_lower_ramsey, RR}, we know that if $H_1$ is not a star forest, then
for $p\ll n^{-1/m_2(H_1)}$ with high probability there is a colouring $\chi$ of the
edges of~$G(n,p)$ with two colours containing no monochromatic copy
of~$H_1$.  Clearly, for~$e(H_2)\geq3$, there
is no rainbow copy of $H_2$ in $\chi$ and hence~$\hat p(H_1,H_2)=\Omega(
n^{-1/m_2(H_1)})$.
A matching upper bound for the threshold holds in this case (see
Theorem~\ref{thm:main}~\ref{item:thm_i}), but we do not
deduce it from the $1$-statement in~\cite{RR}.

Now let $H_1$ be a star forest\footnote{Note that a matching is a star
  forest.}.  We call a disjoint union of edges and cherries a
\emph{short forest}.  Furthermore, we call a star forest that has at least two
components a \emph{constellation}.  If~$H_1$ is a constellation
that is not a matching and $H_2$ is not a short forest, then we prove
that, again, $\hat p(H_1,H_2) = n^{-1/m_2(H_1)}$ (see
Theorem~\ref{thm:main}~\ref{item:thm_i}).
On the other hand, if $H_1$ is a star, or else $H_1$ is a
constellation and $H_2$ is a short forest, then the threshold depends
on a parameter $\mF(H_1, H_2)$, defined by
\begin{equation*}
  \mF(H_1,H_2) = \inf\{m(F)\,:\, \text{$F$ is a
    forest and }F\mr(H_1,H_2)\}.
\end{equation*}
Our main theorem is as follows. Its two parts will be proven in
Sections~\ref{sec:m2}~and~\ref{sec:mf}.

\begin{theorem}\label{thm:main} Let $H_1$ be a graph and $H_2$ be a
forest with $e(H_1) \geq 2$ and $e(H_2)\geq 3$.
  \begin{enumerate}[label=\rmlabel]
  \item\label{item:thm_i} If $H_1$ is not a star forest, or else~$H_1$ is a constellation
    with $\Delta(H_1) \geq 2$ and $H_2$ is not a short forest, then\label{eq:item1}
    \begin{equation}
      \label{eq:main2} \hat p(H_1,H_2) = n^{-1/m_2(H_1)}.
    \end{equation}

  \item\label{item:thm_ii} If $H_1$ is a star, or else $H_1$ is a
    constellation and $H_2$ is a short forest, then $\mF(H_1,H_2) <
    1$ and\label{eq:item2}
     \begin{equation}
      \label{eq:main1} \hat p(H_1,H_2) = n^{-1/\mF(H_1,H_2)}.
    \end{equation}
  \end{enumerate}
\end{theorem}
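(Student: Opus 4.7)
The plan is to prove each part of Theorem~\ref{thm:main} by separately establishing its $0$-statement and $1$-statement, using throughout that $G\mr(H_1,H_2)$ is upward-closed under the subgraph relation. For Part~\ref{item:thm_i}, the $0$-statement in the non-star-forest case comes directly from the lower bound of R\"odl--Ruci\'nski's symmetric Ramsey theorem with two colours: whp, $G(n,p)$ has a $2$-colouring with no monochromatic $H_1$, and because $e(H_2)\ge 3$ no $2$-colouring admits a rainbow~$H_2$. In the constellation subcase, where $H_1$ is a star forest so R\"odl--Ruci\'nski does not apply directly, I would exhibit an explicit colouring of $G(n,p)$ using a bounded number of colours strictly less than $e(H_2)$, large enough to prevent a monochromatic star forest~$H_1$ but too small to produce a rainbow~$H_2$; this is where the assumption that $H_2$ is not a short forest enters, giving enough slack in the colour budget. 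For the $1$-statement, given $p\gg n^{-1/m_2(H_1)}$ and any colouring~$\chi$ of $G(n,p)$, I would dichotomise on the number of colour classes: if at most $r_0=r_0(H_1,H_2)$ are used, the multicolour R\"odl--Ruci\'nski theorem yields a monochromatic~$H_1$; otherwise, picking one edge per colour gives a rainbow subgraph of $G(n,p)$ with many edges, into which the forest $H_2$ embeds greedily.

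For Part~\ref{item:thm_ii}, the $1$-statement at $p\gg n^{-1/\mF}$ follows from the infimum definition of $\mF$: there is a forest~$F$ with $F\mr(H_1,H_2)$ and $m(F)$ arbitrarily close to $\mF$, so a suitable choice of such $F$ makes $F\subseteq G(n,p)$ whp by standard subgraph containment, and upward closure delivers $G(n,p)\mr(H_1,H_2)$. For the $0$-statement at $p\ll n^{-1/\mF}$, since $\mF<1$ we have $p\ll n^{-1}$, so $G(n,p)$ is a forest whp; a first-moment estimate shows that whp every component of $G(n,p)$ has at most $v^\star$ vertices, where $v^\star$ is the largest integer with $1-1/v^\star<\mF$. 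The density of any subforest $F\subseteq G(n,p)$ is then a weighted average of the densities of its intersections with the components of $G(n,p)$, each bounded above by $1-1/v^\star$, so $m(F)<\mF$; by the definition of $\mF$ this forces $F\not\mr(H_1,H_2)$, and upward closure yields $G(n,p)\not\mr(H_1,H_2)$.

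The most delicate step, which I expect to be the main obstacle, is the $1$-statement of Part~\ref{item:thm_i}: extracting a rainbow~$H_2$ in the "many colours" regime must be tight enough to match the R\"odl--Ruci\'nski threshold~$n^{-1/m_2(H_1)}$, and the constellation subcase requires a tailored Ramsey-type bound for star forests in place of R\"odl--Ruci\'nski. The hypothesis that $H_2$ is not a short forest appears there precisely to bound the number of colours needed to force a rainbow~$H_2$, making the "few colours" and "many colours" regimes meet at the stated threshold.
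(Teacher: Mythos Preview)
Your proposal has genuine gaps in both the $0$-statement and the $1$-statement of Part~\ref{item:thm_i}, and the plan for Part~\ref{item:thm_ii} omits a nontrivial step.

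\textbf{Part~\ref{item:thm_i}, $0$-statement, constellation subcase.} Your idea is to colour $G(n,p)$ with fewer than $e(H_2)$ colours while avoiding a monochromatic copy of the constellation~$H_1$. This cannot work in general. Take $H_1=K_{1,2}\cup K_2$ and $H_2=P_4$ (the path with three edges, which is not a short forest); then you would need a $2$-colouring. But three disjoint copies of $K_{1,3}$ (which appear in $G(n,p)$ whp for, say, $p=n^{-1.1}\ll n^{-1}$) cannot be $2$-coloured without a monochromatic $K_{1,2}\cup K_2$: a short pigeonhole argument on the number of colour-$1$ edges in each star forces it. The paper takes the opposite route: it uses \emph{unboundedly many} colours, structured so that every colour class is a single star (hence no monochromatic constellation), while every vertex has colour-degree at most~$2$ (killing rainbow $H_2$ when $\Delta(H_2)\ge3$) or no path of length~$3$ is rainbow (when $H_2\supset P_4$). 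The hypothesis ``$H_2$ is not short'' is used to guarantee one of these two structural features of~$H_2$, not to enlarge a colour budget.

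\textbf{Part~\ref{item:thm_i}, $1$-statement.} The step ``picking one edge per colour gives a rainbow subgraph with many edges, into which the forest~$H_2$ embeds greedily'' does not follow: that rainbow subgraph could be a matching, which contains no cherry, let alone a prescribed tree. Having many colours globally says nothing about local colour structure. The paper's argument is substantially different: via a strengthened R\"odl--Ruci\'nski bound and a local-to-global lemma of Ruci\'nski--Truszczy\'nski, it shows that whp $G(n,p)$ has the property that in \emph{every} colouring with no monochromatic~$H_1$, every linear-sized vertex set contains a vertex of colour-degree $>r$ into that set. This local richness is then exploited by an induction on the height of a $d$-ary tree to build a rainbow copy of~$T(d,h)\supset H_2$. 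Note also that this argument covers the constellation subcase uniformly (after replacing~$H_1$ by a connected supergraph of the same $m_2$), so no separate ``tailored Ramsey-type bound for star forests'' is needed.

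\textbf{Part~\ref{item:thm_ii}.} Your outline here is essentially the paper's, but you use ``since $\mF<1$'' as input to the $0$-statement without proving it; in the theorem this inequality is part of the conclusion. Equivalently, you must first show that the set $\{F\text{ forest}:F\mr(H_1,H_2)\}$ is non-empty. The paper does this by explicit constructions: a tall $((s-1)(\ell-1)+1)$-ary tree when $H_1=K_{1,s}$, and a bounded-height, high-degree tree when $H_1$ is a constellation and $H_2$ is a short forest. These constructions are where the real work in Part~\ref{item:thm_ii} lies.
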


We remark that, for graphs $H_1$ and $H_2$ as in the statement of
Theorem~\ref{thm:main}~\ref{eq:item2}, the infimum in the definition
of $\mF(H_1,H_2)$ is attained, that is, for such graphs $H_1$ and
$H_2$ we show that there is a forest~$F$ with $m(F) = \mF(H_1,H_2)$
(see Corollary~\ref{cor:well}).

Recall that we are considering the case in which~$H_2$ is a forest.
Even though Theorem~\ref{thm:main} deals with the most interesting
cases under this restriction, it does not cover all cases.
If one of~$H_1$ or~$H_2$ has only one edge,
then trivially~$\hat p(H_1,H_2)=n^{-2}$.
The remaining cases are covered by Proposition~\ref{prop:main},
which is stated below and will be proved in Appendix~\ref{sec:smallcases}.

\begin{proposition}\label{prop:main}
  Let $H_1$ be a graph and $H_2$ be a forest with
  $\min\{e(H_1),\,e(H_2)\}\geq 2$.
  \begin{enumerate}[label=\rmlabel]
     \item\label{item:prop_i} If $H_1$ is a matching and $H_2$ is not a short forest, then\label{eq:item3}
     \begin{equation}
      \label{eq:mainprop1}
      \hat p(H_1,H_2) = n^{-1}.
    \end{equation}

     \item\label{item:prop_ii} If $H_1$ is not a forest and $H_2$ is a
       cherry, or else~$H_1$ is arbitrary and~$H_2$ is a $2$-edge
       matching, then\label{eq:item4}
     \begin{equation}
      \label{eq:mainprop2}
      \hat p(H_1,H_2) = n^{-1/m(H_1)}.
    \end{equation}

  \item\label{item:prop_iii} If $H_1$ is a forest with $k$
    non-isolated vertices and $H_2$ is a cherry, then\label{eq:item5}
     \begin{equation}
      \label{eq:mainprop3}
      \hat p(H_1,H_2) = n^{-k/(k-1)}.
    \end{equation}
\end{enumerate}
\end{proposition}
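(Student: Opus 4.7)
The plan is to handle the three items separately: for items~\ref{item:prop_ii} and~\ref{item:prop_iii}, two simple structural equivalences reduce everything to classical thresholds for containment of~$H_1$ in~$G(n,p)$; item~\ref{item:prop_i} requires a more delicate argument.

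For the cherry case, the starting point is the equivalence $G\mr(H,P_3)$ if and only if some connected component of~$G$ contains~$H$ as a subgraph: any colouring of~$G$ avoiding a rainbow cherry must use the same colour on any two incident edges, so by transitivity each component is monochromatic, and the optimal adversary is to colour distinct components with distinct colours. With this in hand, for the cherry part of item~\ref{item:prop_ii} I would use that~$H_1$ contains a cycle, so any copy of~$H_1$ in~$G(n,p)$ lies in a single component (whp inside the giant one), and apply the classical threshold $n^{-1/m(H_1)}$ for containment of~$H_1$. For item~\ref{item:prop_iii}, I would use that~$H_1$ embeds into some tree on~$k$ vertices whose $G(n,p)$-threshold is $n^{-k/(k-1)}$: above this threshold, such a tree (and hence~$H_1$) sits inside a component, while below it whp no component has as many as~$k$ vertices.

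For the $2K_2$ part of item~\ref{item:prop_ii}, the analogous object is the \emph{disjointness graph}~$D(G)$, with vertex set $E(G)$ and edges joining vertex-disjoint pairs: a colouring of~$G$ avoids a rainbow~$2K_2$ if and only if it is constant on each component of~$D(G)$. Thus $G\mr(H_1,2K_2)$ iff some component of~$D(G)$ contains~$H_1$, and the auxiliary observation is that $D(G)$ is connected whenever~$G$ contains a matching of size~$3$, since any edge of~$G$ meets only four vertices and must be disjoint from some edge of such a matching. Since $e(H_1)\ge 2$ gives $m(H_1)\ge 1/2$ and hence $n^{-1/m(H_1)}\ge n^{-2}$, well above the threshold for a matching of size~$3$, above the threshold $G(n,p)$ whp contains both~$H_1$ and a matching of size~$3$, making $D(G(n,p))$ connected and containing~$H_1$; below the threshold the all-one-colour colouring works because $G(n,p)\not\supseteq H_1$ whp.

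Item~\ref{item:prop_i} ($H_1=tK_2$ with $H_2$ not a short forest) is the most involved. For the $0$-statement, I would first note that $G(n,p)$ is a forest whp when $p\ll n^{-1}$ (the expected cycle count is $o(1)$), so it suffices to construct, for every forest~$F$, an edge-colouring avoiding both a monochromatic~$tK_2$ and a rainbow copy of~$H_2$; the plan is to root every tree component of~$F$ and design a colouring based on parent-depth residues, together with a local refinement that guarantees simultaneously that each colour class has matching number less than~$t$, that every path of length~$3$ uses at most two distinct colours, and that every vertex has at most two distinct colours among its incident edges, which together preclude a rainbow~$H_2$ since every tree with at least three edges contains a~$P_4$ or a~$K_{1,3}$. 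For the $1$-statement ($p\gg n^{-1}$), the plan is to exhibit a unicyclic witness~$F^{\star}$ (so $m(F^{\star})=1$, whence $F^{\star}\subseteq G(n,p)$ whp) satisfying $F^{\star}\mr(tK_2,H_2)$: a long enough cycle with pendants added to accommodate the high-degree components of~$H_2$ suffices, and the Ramsey property is verified by showing that any colouring of~$F^{\star}$ with no monochromatic~$tK_2$ uses enough distinct colours along the cycle to force a rainbow copy of~$H_2$. The hardest step is this case analysis, which splits according to whether the large components of~$H_2$ are paths or trees with branch vertices.
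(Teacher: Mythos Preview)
Your reductions for items~\ref{item:prop_ii} and~\ref{item:prop_iii} are clean and essentially match the paper's argument, though phrased more structurally. One sloppiness: in the cherry part of~\ref{item:prop_ii} you write ``any copy of~$H_1$ lies in a single component'', but~$H_1$ need not be connected (e.g.\ $H_1=C_3\cup K_2$). What you actually need is that \emph{some} copy of~$H_1$ lies inside a single component; the paper secures this by a two-round exposure, finding~$H_1$ inside the linear-sized component produced by the first round.

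The real problem is your $1$-statement for item~\ref{item:prop_i}. You propose to exhibit a fixed unicyclic witness~$F^\star$ with $F^\star\mr(tK_2,H_2)$, but for $H_2=K_{1,3}$ no such~$F^\star$ exists. Indeed, any graph~$F$ with $m(F)\le 1$ is a pseudoforest and so admits an orientation with in-degree at most~$1$ everywhere; colouring each edge by its tail then makes every colour class a star (hence no monochromatic~$2K_2$) while every vertex sees at most two colours (its own and that of its unique in-neighbour), so there is no rainbow~$K_{1,3}$. Thus $F\not\mr(2K_2,K_{1,3})$ for every~$F$ with $m(F)\le 1$, and your ``long cycle with pendants'' cannot work. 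The paper's proof avoids this entirely: it embeds~$H_1$ in a tree~$T$ (which is not a star forest once $e(H_1)\ge 2$) and invokes the $1$-statement of Theorem~\ref{thm:main}\ref{item:thm_i} for~$(T,H_2)$, which in turn rests on the R\"odl--Ruci\'nski machinery via property~$\Q$. That global argument is essential here; a fixed-subgraph witness does not suffice.

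A smaller point on your $0$-statement for~\ref{item:prop_i}: insisting on a single colouring that simultaneously kills rainbow~$P_4$ and rainbow~$K_{1,3}$ is unnecessary. The paper (Proposition~\ref{prop:provideColouring}) simply splits on whether $\Delta(H_2)\ge 3$ or $H_2\supseteq P_4$ and uses a different colouring in each case; both colourings already make every colour class a star, which handles any matching~$H_1$.
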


We observe that when $H_2$ is a forest and~$\min\{e(H_1),\,e(H_2)\}\geq2$,
Theorem~\ref{thm:main} and Proposition~\ref{prop:main} cover all
possibilities.  Indeed, the case $e(H_2) = 2$ is covered by
Proposition~\ref{prop:main}~\ref{eq:item4} and~\ref{eq:item5}.  For
$e(H_2)\geq 3$, note first that the case in which~$H_1$ is not a star
forest is covered in Theorem~\ref{thm:main}~\ref{eq:item1}.  On the
other hand, if~$H_1$ is a star forest, then~$H_1$ is either a star, a
matching or a constellation that is not a matching.
Theorem~\ref{thm:main}~\ref{eq:item2} covers the case in which~$H_1$
is a star.  Theorem~\ref{thm:main}~\ref{eq:item2} and
Proposition~\ref{prop:main}~\ref{eq:item3} cover the case in
which~$H_1$ is a matching\footnote{Note that a non-trivial matching is
  a constellation.}.  Theorem~\ref{thm:main}~\ref{eq:item1}
and~\ref{eq:item2} cover the case in which~$H_1$ is a constellation
that is not a matching.  We have thus covered all cases
when~$H_2$ is a forest.

The problem of determining the threshold for $G(n,p)\mr (H_1,H_2)$
when~$H_2$ is not a forest is still open.  Recall that the only
meaningful case of this problem when~$H_2$ is not a forest is
when~$H_1$ is a star.  When~$H_1$ is a star, the problem of
determining~$\hat p(H_1,H_2)$ generalises the problem of determining
the threshold for the so-called \emph{anti-Ramsey property}
of~$G(n,p)$ for the graph~$H_2$, which states that every proper edge
colouring of~$G(n,p)$ should contain a rainbow copy of~$H_2$.
Clearly, the case in which~$H_1=K_{1,2}$ is this anti-Ramsey property
exactly (proper edge colourings coincide with colourings that avoid
monochromatic~$K_{1,2}$).  The more general case in
which~$H_1=K_{1,r}$ is investigated in~\cite{KoKoMo14}, where it is
proved that for every fixed $r\geq2$ and any~$H_2$, with high
probability we have $G(n,p) \mr (K_{1,r}, H_2)$ whenever
$p \gg n^{-1/m_2(H_2)}$.  However, $n^{-1/m_2(H_2)}$~turns out not to
be the threshold for some graphs, as shown in~\cite{KoKoMo18} (for related results,
see~\cite{BaCaMoPa21+, KoMoPaSc21+, NePeSkSt17}).

This paper is organised as follows. In Section~\ref{sec:prelim} we
provide some results about random graphs that will be useful in the proofs of
Theorem~\ref{thm:main} and Proposition~\ref{prop:main}.
The items~\ref{eq:item1} and~\ref{eq:item2} of
Theorem~\ref{thm:main} are proved, respectively, in
Sections~\ref{sec:m2} and~\ref{sec:mf}.
Some open problems are discussed in Section~\ref{sec:open}.
 In Appendix~\ref{sec:smallcases} we give a short proof of
  Proposition~\ref{prop:main} and in Appendix~\ref{sec:mfbounds} we estimate the parameter $\mF(K_{1,3},H_2)$ when $H_2$ is a complete binary tree or a path.

\section{Random graphs}\label{sec:prelim}

Given a graph $G$, a colouring $\chi \colon E(G) \to \mathbb{N}$, a
vertex $v \in V(G)$ and a set $X \subset V(G)$, let
the \emph{colour-degree} of $v$ in $X$ be given by $d_\chi(v, X) =
\left|\left\{ \chi(e) : u\in e\text{ and }e \setminus \{v\} \subset X \right\}\right|$.
We write simply $d_\chi(v)$ for $d_\chi(v,V(G))$.
The following definition plays an important rôle in our proof.

\begin{property} Let $H$ be a graph, $r\geq 2$ be an
integer, and $0 < b \leq 1$. A graph $G$ satisfies property $\Q(b,r,H)$ if
every edge colouring $\chi$ of $G$ with no monochromatic copy of $H$
is such that every subset $X\subset V(G)$ with $|X| \geq bn$ contains a
vertex $v$ with $d_\chi(v, X) > r$.
\end{property}

The aim of this section is to prove the following result, which is
important in the proof of the $1$-statement of Theorem~\ref{thm:main}~\ref{eq:item1}.

\begin{theorem}\label{thm:property_Q} Let $H$ be a connected graph, $r
  \geq 2$ be an integer and $0 < b \leq 1$. If $p \gg n^{-1/m_2(H)}$, then $G(n,p)$
  satisfies property~$\Q(b, r, H)$ with high probability.
\end{theorem}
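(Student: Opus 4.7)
The plan is to prove the contrapositive. Suppose for contradiction that, with probability bounded away from zero, $G=G(n,p)$ admits a \emph{bad pair} $(\chi, X)$: an edge colouring $\chi$ of $G$ with no monochromatic copy of $H$, together with a set $X\subseteq V(G)$ of size $|X|\geq bn$ such that $d_\chi(v,X)\leq r$ for every $v\in X$. Since the induced subgraph $G[X]$ is distributed as $G(|X|,p)$ and the restriction $\chi|_{G[X]}$ is an edge colouring of $G[X]$ with vertex colour-degree at most $r$ and no monochromatic $H$, it suffices to prove the following \emph{colour-degree Ramsey property}: with high probability, every edge colouring of $G[X]$ in which every vertex uses at most $r$ colours must contain a monochromatic copy of $H$.

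The key step is to reduce this colour-degree Ramsey property to the standard R\"odl--Ruci\'nski $1$-statement for symmetric $k$-colour Ramsey, with $k=k(r)$ bounded. For each vertex $v\in X$ let $C_v$ denote the at-most-$r$-element set of $\chi$-colours on edges of $G[X]$ incident to $v$, fix an injection $\sigma_v\colon C_v\hookrightarrow[r]$, and define an auxiliary $k$-colouring $\chi'$ of $G[X]$ by encoding the pair $(\sigma_u(\chi(uv)),\sigma_v(\chi(uv)))$ at each edge $uv$, together with a canonical orientation of the edges. The connectedness of $H$ is then used to show that any $\chi'$-monochromatic copy of $H$ propagates into a $\chi$-monochromatic copy, by tracking the ``correct $\chi$-colour'' along a spanning tree of the copy. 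Applying the R\"odl--Ruci\'nski $1$-statement for $k$ colours to $G[X]\sim G(|X|,p)$ at the threshold $n^{-1/m_2(H)}$ gives, whp, such a $\chi'$-monochromatic copy, hence a $\chi$-monochromatic copy, contradicting the assumption on $\chi$. To obtain this uniformly over all $X$ with $|X|\geq bn$, I would invoke a quantitative version of R\"odl--Ruci\'nski with super-polynomially small failure probability (available via the hypergraph container method of Nenadov--Steger or Saxton--Thomason), and union-bound over the at most $2^n$ admissible subsets; this is affordable since $p\gg n^{-1/m_2(H)}\geq n^{-1}$, using that $m_2(H)\geq 1$ for connected $H$ with $e(H)\geq 2$.

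The main obstacle is the reduction itself: the naive definition $\chi'(uv)=(\sigma_u(\chi(uv)),\sigma_v(\chi(uv)))$ only guarantees that at each vertex of a $\chi'$-monochromatic copy the incident edges carry one of at most two possible $\chi$-values (namely $\sigma_w^{-1}(a)$ or $\sigma_w^{-1}(b)$, depending on whether the other endpoint is smaller or larger in the fixed ordering), rather than a single one. To force genuine $\chi$-monochromaticity one must exploit the connectedness of $H$ more carefully---e.g.\ by enriching the auxiliary palette according to an orientation or a spanning-tree structure on $H$, or by randomising over vertex orderings and running an averaging argument---and this is the most delicate combinatorial part of the proof.
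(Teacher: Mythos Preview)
Your overall architecture matches the paper's: reduce the $r$-local colouring on $G[X]$ to a genuine bounded-palette colouring on a linear-sized subset, then invoke a quantitative R\"odl--Ruci\'nski $1$-statement and union-bound over the at most $2^n$ choices of~$X$.

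The gap is exactly where you flag it. Your pair-encoding $\chi'(uv)=(\sigma_u(\chi(uv)),\sigma_v(\chi(uv)))$ does not force $\chi$-monochromaticity, and the remedies you list (orientations, spanning-tree enrichments, randomised orderings) remain sketches. The paper sidesteps this by citing the Ruci\'nski--Truszczy\'nski lemma (Lemma~2 of~[RT]), whose proof is a clean random projection rather than a pair-encoding: pick a uniformly random map $\phi$ from the set of $\chi$-colours to~$[r]$, let $W\subseteq X$ be the set of vertices $v$ for which $\phi|_{C_v}$ is injective, and set $\chi'=\phi\circ\chi$. Then $\EE|W|\geq (r!/r^r)|X|$, and on $G[W]$ any $\chi'$-monochromatic copy of the \emph{connected} graph~$H$ is automatically $\chi$-monochromatic: local injectivity of~$\phi$ pins down a unique $\chi$-colour at each vertex, and connectedness propagates it across the copy. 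This single idea replaces your ``most delicate combinatorial part'' and yields the reduction with $k=r$ colours on a subset of size at least $(r!/r^r)\,bn$.

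Two minor points. First, the exponentially small failure probability you need is already Theorem~3 of R\"odl--Ruci\'nski, giving $\PP\big(G(m,p)\not\to(H)_r\big)\leq e^{-\Omega(m^2p)}$; hypergraph containers are unnecessary here. Second, your sentence ``$G[X]$ is distributed as $G(|X|,p)$'' is only literally true for fixed~$X$; since~$X$ depends on~$\chi$, the correct phrasing is precisely the union bound you later describe.
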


We shall need the following well-known result of Bollobás.

\begin{theorem}[\cite{Bo81}]\label{thm:BB} Let $H$ be an arbitrary
graph with at least one edge. Then the threshold for $H$ to be a
subgraph of $G(n,p)$ is $n^{-1/m(H)}$.
\end{theorem}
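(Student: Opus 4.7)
The plan is to establish both the $0$-statement and the $1$-statement via moment methods. Write $X = X_H(n,p)$ for the number of (labelled) copies of $H$ in $G(n,p)$. Without loss of generality we may assume $H$ has no isolated vertices, since these do not affect the containment problem for $n$ large.

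For the $0$-statement, choose a subgraph $J^{\star}\subseteq H$ with $v(J^{\star})\geq1$ achieving the maximum in $m(H)$, that is, $e(J^{\star})/v(J^{\star}) = m(H)$. A first-moment estimate gives
\[
\EE[X_{J^{\star}}(n,p)] \leq n^{v(J^{\star})} p^{e(J^{\star})} = \bigl(n\, p^{m(H)}\bigr)^{v(J^{\star})}.
\]
If $p \ll n^{-1/m(H)}$, then $n\,p^{m(H)}\to 0$, so by Markov's inequality $G(n,p)$ contains no copy of $J^{\star}$, and hence no copy of $H$, with high probability.

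For the $1$-statement, I would use the second-moment method on $X$. A routine count gives $\EE[X] = \Theta(n^{v(H)} p^{e(H)})$, which tends to infinity under $p \gg n^{-1/m(H)}$ because $v(H)/e(H) \geq 1/m(H)$. The variance splits as
\[
\mathrm{Var}(X) \;=\; \sum_{J} \Theta\bigl(n^{2v(H)-v(J)}\, p^{2e(H)-e(J)}\bigr),
\]
where the sum ranges over subgraphs $J\subseteq H$ with $v(J)\geq1$ and $e(J)\geq1$, indexing the different possible intersection patterns of two copies of $H$. Dividing by $\EE[X]^{2}$, each term becomes
\[
\Theta\!\left(\frac{1}{n^{v(J)}\,p^{e(J)}}\right) \;\leq\; \Theta\!\left(\frac{1}{(n\,p^{m(H)})^{v(J)}}\right),
\]
using $e(J)\leq m(H)\,v(J)$ and $p<1$. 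Under the hypothesis $p\gg n^{-1/m(H)}$, each such term is $o(1)$, so $\mathrm{Var}(X) = o(\EE[X]^{2})$, and Chebyshev's inequality yields $\PP[X=0]\to 0$. One also needs to include the term from disjoint pairs of copies, but that contribution is already absorbed into $\EE[X]^{2}$ and gives a covariance of order $0$.

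The only real obstacle is the bookkeeping in the variance decomposition, namely verifying that the sum over intersection types $J$ has $O(1)$ terms (since $H$ is fixed) and that the dominant contribution to the number of ordered pairs of copies of $H$ overlapping in a subgraph isomorphic to $J$ is indeed $\Theta(n^{2v(H)-v(J)})$ up to a constant depending only on $H$. Once this is in place, the $m(H)$-criterion on $p$ uniformly controls every term, and both directions of the threshold follow.
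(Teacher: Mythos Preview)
The paper does not prove this theorem: it is quoted as a well-known result of Bollob\'as~\cite{Bo81} and used as a black box. Your proposal is the standard first- and second-moment argument, essentially the original proof, and it is correct as sketched; there is nothing to compare it against in the paper itself.
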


We write $G\to (H)_r$ for the property that in every $r$-colouring of
the edges of $G$ there is a monochromatic copy of $H$. When
dealing with a graph $H_1$ that is not a star forest, we use the
following consequence of a celebrated result of R\"odl and
Ruci\'nski to obtain the $0$-statement for the property $G(n,p)\mr
(H_1,H_2)$.

\begin{theorem}[\cite{rodl93:_lower_ramsey, RR}]\label{RR-original} For
every integer $r\geq 2$ and every graph $H$ which is not a star
forest, the threshold for the property $G(n,p)\to (H)_r$ is $n^{-1/m_2(H)}$.
\end{theorem}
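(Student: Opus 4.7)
The approach is to reduce the ``bounded colour-degree forces monochromatic $H$'' content of~$\Q(b,r,H)$ to a bounded-palette Ramsey statement supplied by Theorem~\ref{RR-original}, using the connectedness of~$H$ as the bridge. First I would establish a uniform version of R\"odl--Ruci\'nski: for $p\gg n^{-1/m_2(H)}$, with high probability $G=G(n,p)$ satisfies $G[X]\to(H)_R$ for every $X\subseteq V(G)$ with $|X|\geq bn$, where $R$ is a constant depending only on~$r$. Since $G[X]$ is distributed as~$G(|X|,p)$ and $p\gg |X|^{-1/m_2(H)}$ whenever $|X|\geq bn$, Theorem~\ref{RR-original} applied to $G[X]$ gives the statement for each fixed~$X$ with high probability; uniformity over~$X$ requires a quantitative strengthening with exponentially small failure probability, which one can obtain through the hypergraph container method or a sparse-regularity argument.

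The reduction is as follows. Suppose for contradiction that $G$ fails~$\Q(b,r,H)$; fix a witnessing colouring~$\chi$ of~$G$ without a monochromatic~$H$ together with a set $X\subseteq V(G)$ with $|X|\geq bn$ on which $d_\chi(v,X)\leq r$ for all $v\in X$. For each $v\in X$, let $L(v)$ be the (at most~$r$) $\chi$-colours appearing at~$v$ within~$X$, and fix an arbitrary bijection $\pi_v\colon L(v)\to\{1,\dots,|L(v)|\}\subseteq[r]$. Define an auxiliary $(r+1)$-colouring of~$G[X]$ by
\[
  \chi'(uv)=\begin{cases}
    \pi_u(\chi(uv)) & \text{if }\pi_u(\chi(uv))=\pi_v(\chi(uv)),\\
    \star           & \text{otherwise.}
  \end{cases}
\]
Applying the uniform Ramsey statement (with $R=r+1$) yields a $\chi'$-monochromatic copy~$H_0$ of~$H$ in~$G[X]$. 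If the $\chi'$-colour of~$H_0$ is some $a\in\{1,\dots,r\}$, then every edge $uv\in E(H_0)$ satisfies $\chi(uv)=\pi_u^{-1}(a)=\pi_v^{-1}(a)$, so any two edges of~$H_0$ sharing a vertex~$u$ have the common $\chi$-colour~$\pi_u^{-1}(a)$. Since~$H$ is connected, this colour propagates through~$H_0$ and produces a $\chi$-monochromatic copy of~$H$, contradicting the choice of~$\chi$.

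The remaining case, in which $\chi'\equiv\star$ on~$H_0$ so that every edge of~$H_0$ is ``off-diagonal'' (i.e.\ $\pi_u(\chi(uv))\neq\pi_v(\chi(uv))$), is the main obstacle. My plan here is to upgrade the auxiliary colouring to the ordered-pair version $\chi''(uv)=(\pi_u(\chi(uv)),\pi_v(\chi(uv)))\in[r]^2$, using a fixed total order on~$V(G)$, apply the uniform Ramsey statement with $R=r^2$, and invoke a supersaturation variant of Theorem~\ref{RR-original} to extract $\Omega(n^{v(H)}p^{e(H)})$ monochromatic $H$-copies in $\chi''$; pigeonholing over the $r^2$ possible types $(i,j)$ produces a copy with $i=j$, after which the connectedness argument of the previous paragraph concludes the proof. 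Getting a clean supersaturation statement uniform over all large~$X$, so that the ``diagonal'' copy can always be extracted, is the delicate point; once this is in place the rest of the argument is routine.
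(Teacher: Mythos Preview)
Your proposal is not a proof of Theorem~\ref{RR-original}: that theorem is a result of R\"odl and Ruci\'nski which the paper \emph{cites without proof}. What you have written is an attempted proof of Theorem~\ref{thm:property_Q} (the property-$\Q$ theorem), and your argument even \emph{invokes} Theorem~\ref{RR-original} as a tool.

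Reading your proposal as targeting Theorem~\ref{thm:property_Q}, the diagonal case of your auxiliary colouring is correct, but the $\star$ case has a genuine gap. Pigeonholing $\Omega(n^{v(H)}p^{e(H)})$ monochromatic $\chi''$-copies over the $r^2$ ordered types $(i,j)$ only guarantees that \emph{some} type is popular; it does not force any copy to have $i=j$. Nothing prevents every monochromatic $\chi''$-copy from having off-diagonal type, so supersaturation buys you nothing here. The paper sidesteps this entirely via Lemma~\ref{lem:r_local} (Ruci\'nski--Truszczy\'nski): an $r$-local colouring of $G[X]$ with no monochromatic~$H$ yields a genuine $r$-colouring of $G[W]$ for some $W\subset X$ with $|W|\geq(r!/r^r)|X|$, still with no monochromatic~$H$. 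Combined with the uniform Ramsey statement (Corollary~\ref{cor:probabilistic}, obtained from the quantitative Theorem~\ref{RR-strength} by a union bound over all large~$X$), this gives an immediate contradiction with no auxiliary colouring needed.
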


We also need the following strengthening of the $1$-statement of
Theorem~\ref{RR-original}.
\begin{theorem}[Theorem 3 of~\cite{RR}]\label{RR-strength} Let $H$ be
a graph with at least one edge and let $r \geq 2$ be an integer. There
exist constants $n_0$, $b$ and $C$ such that if $n \geq n_0$ and $p \geq
Cn^{-1/m_2(H)}$, then
  \[ \mathbb{P}\left(G(n,p) \to (H)_r\right) \geq 1 - \exp\left(-b n^2
p\right). \]
\end{theorem}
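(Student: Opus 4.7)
The goal is to upgrade the $1$-statement of Theorem~\ref{RR-original} to exponential concentration: when $p \geq C n^{-1/m_2(H)}$, the event $G(n,p) \not\to (H)_r$ should have probability at most $\exp(-b n^2 p)$. My plan is a deletion-style union bound. Since the number of $r$-edge-colourings of $G(n,p)$ is at most $r^{|E(G(n,p))|}$, and $|E(G(n,p))| \leq 2\binom{n}{2}p$ except on an event of probability $\exp(-\Omega(n^2 p))$ by Chernoff, enumerating colourings costs entropy at most $\exp((\log r)\,n^2 p)$. Each individual colouring must therefore be shown to avoid a monochromatic $H$ with probability smaller than $\exp(-(\log r + b')\,n^2 p)$ for some $b' > 0$.

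The first ingredient is a supersaturation statement: any $n$-vertex graph $F$ whose edge set is partitioned into $r$ $H$-free graphs necessarily avoids a family $\mathcal{J}$ of $\Omega(n^{v(H)})$ copies of $H$ in $K_n$, in the sense that at least a constant fraction of the copies in $\mathcal{J}$ must be missing from $F$ (equivalently, a positive-density collection of copies in $K_n$ cannot all appear in $F$ without forcing a monochromatic $H$). In the dense setting this is a standard Ramsey–Tur\'an consequence; in our setting one localises to $G(n,p)$ via a sparse regularity argument, working with an a.a.s.\ ``good'' pseudo-random skeleton inside $G(n,p)$ on which extremal-type density bounds are available.

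The second ingredient is a Janson-type concentration bound for copy counts. Applied to the events $\{J \subseteq G(n,p)\}$ as $J$ ranges over copies of $H$ in $K_n$, Janson's inequality yields deviation probability at most $\exp(-c\mu^2/\Delta)$, where $\mu = \Theta(n^{v(H)} p^{e(H)})$ and $\Delta$ is dominated by pairs of copies sharing the densest subgraph $H'\subseteq H$ realising $m_2(H)$. A direct computation gives $\mu^2/\Delta = \Omega(n^2 p)$ at $p = C n^{-1/m_2(H)}$, with an implicit constant that can be made as large as desired by increasing $C$. Combined with the supersaturation step, this yields the per-colouring survival bound we need.

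The main obstacle is that Janson's inequality must be applied uniformly over colour classes that themselves depend on $G(n,p)$, so a direct per-colouring union bound is circular. The standard fix — which is the heart of R\"odl and Ruci\'nski's argument — is to replace the data of a colouring by a much coarser \emph{certificate}, for instance a partition of the fixed deterministic family $\mathcal{J}$ into ``present in $G(n,p)$ but non-monochromatic'' and ``missing from $G(n,p)$'' parts, and to apply Janson to each certificate separately. The number of certificates is bounded by a much smaller entropy than $r^{|E(G(n,p))|}$, and the exponential rate $\Omega(n^2 p)$ inherited from Janson then absorbs this entropy once $C$ is large, yielding the stated conclusion for any sufficiently small $b > 0$.
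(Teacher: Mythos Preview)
The paper does not prove this statement: Theorem~\ref{RR-strength} is quoted verbatim as Theorem~3 of R\"odl and Ruci\'nski~\cite{RR} and used as a black box. There is therefore no ``paper's own proof'' to compare against.

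As for your sketch on its own merits: the overall architecture (entropy of colourings versus a Janson-type exponential bound, mediated by some coarse certificate) is indeed the shape of the R\"odl--Ruci\'nski argument, but what you have written is not yet a proof. Two concrete issues. First, your supersaturation statement is garbled: you write that an $r$-partitioned $H$-free graph $F$ ``avoids a family $\mathcal{J}$ of $\Omega(n^{v(H)})$ copies of $H$ in $K_n$'', but every graph trivially misses most copies of $H$ in $K_n$; the content has to be that a positive fraction of copies of $H$ \emph{that are present in $G(n,p)$} fail to be monochromatic, and this is exactly where the circularity you mention bites. Second, the certificate step is the entire difficulty and you have only named it. Saying ``replace the colouring by a coarser certificate whose entropy is absorbed by the Janson rate'' is a description of what one wants, not a construction; in~\cite{RR} this is done via a careful combinatorial argument (in modern language, a container/KŁR-type mechanism) that takes real work. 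Your assertion that $\mu^2/\Delta = \Omega(n^2 p)$ with constant growing in $C$ is correct and is the right back-of-envelope check, but without an actual certificate enumeration it does not close the argument.
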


By using the union bound and Theorem~\ref{RR-strength}, we obtain the following corollary.

\begin{corollary}\label{cor:probabilistic} Let $r \geq 2$,
  $\varepsilon > 0$, and $H$ be a graph with $\Delta(H) \geq 2$.
  If $p \gg n^{-1/m_2(H)}$, then the following holds with high probability for $G=G(n,p)$.
  For every edge colouring $\chi$ of $G$ with no monochromatic copy of~$H$ and every $X \subset V(G)$ of size $|X| \geq \varepsilon n$, the graph $G[X]$ is coloured with more than $r$ colours under $\chi$.
\end{corollary}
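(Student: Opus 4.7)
The plan is to apply Theorem~\ref{RR-strength} combined with a union bound over all sufficiently large vertex subsets. The key reformulation is contrapositive: if a colouring $\chi$ of $G$ avoids monochromatic copies of $H$ and uses at most $r$ colours on $E(G[X])$, then the restriction of $\chi$ to $G[X]$ witnesses $G[X] \not\to (H)_r$. Hence it suffices to show that, with high probability, $G[X] \to (H)_r$ holds simultaneously for every $X \subset V(G)$ with $|X| \geq \varepsilon n$.

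Next, I would fix such a subset $X$ and observe that $G[X]$ has the distribution of $G(|X|, p)$. Since $|X| \geq \varepsilon n$ and $p \gg n^{-1/m_2(H)}$, we have $p \geq C|X|^{-1/m_2(H)}$ for $n$ large enough, so Theorem~\ref{RR-strength} applied to $G[X]$ gives
\[
\PP\bigl[G[X] \not\to (H)_r\bigr] \leq \exp\bigl(-b|X|^2 p\bigr) \leq \exp\bigl(-b\varepsilon^2 n^2 p\bigr).
\]
A union bound over the at most $2^n$ choices of $X$ then bounds the total failure probability by $\exp(n\ln 2 - b\varepsilon^2 n^2 p)$.

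The only subtle point, and the main (admittedly minor) obstacle, is to ensure that this union bound actually closes. This is precisely where the hypothesis $\Delta(H) \geq 2$ enters: since $H$ must then contain a copy of $P_3$, we have $m_2(H) \geq (2-1)/(3-2) = 1$, whence $p \gg n^{-1/m_2(H)} \geq n^{-1}$ and so $np \to \infty$. The exponent $n\ln 2 - b\varepsilon^2 n^2 p = n(\ln 2 - b\varepsilon^2 np)$ therefore tends to $-\infty$, which closes the union bound and completes the proof. Beyond verifying this density estimate, the argument is a direct appeal to the quantitative Rödl--Ruciński bound and I do not anticipate further obstacles.
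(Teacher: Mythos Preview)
Your proposal is correct and matches the paper's proof essentially line for line: both argue by contrapositive, apply Theorem~\ref{RR-strength} to each $G[X]$ with $|X|\geq \varepsilon n$, take a union bound over the at most $2^n$ subsets, and close that union bound by observing that $\Delta(H)\geq 2$ forces $m_2(H)\geq 1$ and hence $np\to\infty$. If anything, you are slightly more explicit than the paper in tracking the $\varepsilon^2$ factor and in justifying $m_2(H)\geq 1$ via the $P_3$ subgraph.
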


\begin{proof} Let $n_0 \in \mathbb{N}$ and $b > 0$ be given by applying Theorem~\ref{RR-strength}, let $p \gg n^{-1/m_2(H)}$ and note that since $H$ is not a matching, we have $p\gg n^{-1}$.

  If $n$ is larger than $n_0/\varepsilon$, the probability that a subset $X
  \subset V(G)$ of size $\varepsilon n$ is such that $G[X]$ admits an $r$-colouring with no monochromatic copy of $H$ is at most $e^{-bn^2 p}$, by Theorem~\ref{RR-strength}.
  By the union bound, the desired conclusion fails with probability at most $2^n \cdot e^{-bn^2p}=o(1)$.
\end{proof}

The next step is to strengthen Corollary~\ref{cor:probabilistic} by making
the restriction on the number of colours hold locally. We say a
colouring $\chi$ of a graph $F$ is \emph{$r$-local} if $d_\chi(v) \leq
r$ for every $v \in V(F)$. The following lemma is due to Ruci\'{n}ski and Truszczy\'{n}ski \cite{RT}.

\begin{lemma}[Lemma 2 of~\cite{RT}]\label{lem:r_local} Let $F$ and $H$
be graphs and suppose $H$ is connected. If there is an $r$-local
colouring $\chi$ of $F$ with no monochromatic copy of~$H$, then there
exists $W \subset V(F)$ of size $|W| \geq (r!/r^{r})\cdot v(F)$ and an
$r$-colouring $\chi'$ of the edges of $F[W]$ with no monochromatic
copy of~$H$.
\end{lemma}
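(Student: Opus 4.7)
My plan is a short random recolouring argument. Letting $\mathcal{C}$ denote the set of colours used by $\chi$, I would independently and uniformly assign to each $c\in\mathcal{C}$ a value $Y_c \in [r]$, and define $\chi'(e) = Y_{\chi(e)}$ for every $e \in E(F)$. The natural candidate for $W$ is
\[
  W = \{v \in V(F) : Y\text{ is injective on }C(v)\},
\]
where $C(v) = \{\chi(e) : v \in e\}$ is the local palette at $v$; note that $|C(v)| \leq r$ since $\chi$ is $r$-local. By construction $\chi'$ restricts to an $r$-colouring of $E(F[W])$, so it remains to bound $|W|$ from below in expectation and to verify the absence of monochromatic copies of $H$.

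For the size estimate, the event $\{v \in W\}$ is exactly the event that $|C(v)|$ i.i.d.\ uniform samples from $[r]$ are pairwise distinct, which has probability $r!/\bigl((r-|C(v)|)!\,r^{|C(v)|}\bigr) \geq r!/r^{r}$, the last inequality reducing to the elementary bound $m! \leq r^{m}$ with $m = r - |C(v)| \leq r$. Linearity of expectation then yields $\EE|W| \geq (r!/r^{r})\,v(F)$, and fixing any outcome achieving this expectation produces the required set $W$.

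To finish, I would argue deterministically that for any outcome of the $Y_c$'s, the colouring $\chi'$ admits no monochromatic copy of $H$ in $F[W]$. Indeed, suppose such a copy existed, monochromatic in some $j \in [r]$. At each vertex $v$ of the copy, every incident edge of the copy has $\chi$-colour in $Y^{-1}(j) \cap C(v)$; but $v \in W$ forces $Y$ to be injective on $C(v)$, so this intersection has size at most one. Hence all edges of the copy meeting $v$ share a single $\chi$-colour, and the connectedness of $H$ propagates this equality along the copy, producing a monochromatic copy of $H$ in $\chi$ and contradicting the hypothesis.

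The main (and essentially only) structural point is this last propagation step, which is exactly where the connectedness hypothesis on $H$ is used; everything else is a one-line probability computation together with linearity of expectation. The constant $r!/r^{r}$ appears naturally as the worst case $|C(v)|=r$ in that computation.
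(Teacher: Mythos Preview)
The paper does not supply its own proof of this lemma; it is quoted from Ruci\'nski and Truszczy\'nski and used as a black box. Your random-projection argument is correct and is in fact the standard proof (and, as far as I know, the one in the cited source): project the colour set uniformly onto $[r]$, retain those vertices on whose local palette the projection is injective---which yields the $r!/r^r$ factor via the birthday computation and linearity of expectation---and then use connectedness of $H$ to propagate the ``single $\chi$-colour per vertex'' constraint across any $\chi'$-monochromatic copy in $F[W]$, contradicting the hypothesis on $\chi$.
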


Combining Corollary~\ref{cor:probabilistic} and
Lemma~\ref{lem:r_local}, we prove Theorem~\ref{thm:property_Q}.

\begin{proof}[Proof of Theorem~\ref{thm:property_Q}]
  Let a connected graph $H$, an integer $r \geq 2$ and $b > 0$ be given.
  Let us consider $G=G(n,p)$ with $p\gg n^{-1/m_2(H)}$.
  If $e(H) \leq 1$, then $m_2(H) = m(H)$ and therefore $H \subset G$ with high probability.
  In this case, the conclusion holds vacuously, because every copy of $H$ is trivially monochromatic.
  We may therefore assume that $\Delta(H) \geq 2$, since $H$ is connected.

    Take $\varepsilon = br!/r^r$, let $\chi$ be an edge colouring of $G$ with no monochromatic copy of~$H$ and consider a set $X
    \subset V(G)$ of size $|X| \geq b n$.

    Suppose for a contradiction that $\chi$ restricted to $G[X]$ is
    $r$-local. By Lemma~\ref{lem:r_local} applied with $F = G[X]$ and
    $H$, there exists a set $W\subset X$ with $|W| \geq (r!/r^r)|X|
    \geq \varepsilon n$ and an $r$-colouring $\chi'$ of the edges of
    $G[W]$ with no monochromatic copy of~$H$. But, from
    Corollary~\ref{cor:probabilistic}, we know that with high
    probability there is no such $r$-colouring, which concludes the proof.
\end{proof}

\section{Threshold at $n^{-1/m_2(H_1)}$}
\label{sec:m2}

Let $H_1$ be a graph and $H_2$ be a forest with $e(H_1)\geq 2$ and $e(H_2)\geq 3$.
In this section we prove Theorem~\ref{thm:main}~\ref{eq:item1}, which states that $\hat p(H_1,H_2) = n^{-1/m_2(H_1)}$ when $H_1$ is not a star forest, and also when $H_1$ is a constellation with $\Delta(H_2)\geq 2$ and $H_2$ is not a short forest.

\subsection{$1$-statement}

In this section it will be useful to assume that $H_1$ is connected.
This assumption is justified by Proposition~\ref{prop:density}
below, which when iterated implies that every disconnected graph with
at least one edge and components $J_1,\ldots,J_k$ is a spanning subgraph of a
connected graph $H_1$ with $m_2(H_1)=\max\{1, m_2(J_1), \ldots, m_2(J_k)\}$.

\begin{proposition}
  \label{prop:density}
Let $G_1$ and $G_2$ be connected graphs on disjoint vertex sets such that $m_2(G_1) \geq
m_2(G_2)$ and $e(G_1)\geq 1$. If $G$ is a graph obtained by adding a single edge between $G_1$
and $G_2$, then $m_2(G) = \max\{m_2(G_1), 1\}$.
\end{proposition}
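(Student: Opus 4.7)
The plan is to prove the two inequalities $m_2(G) \ge \max\{m_2(G_1),1\}$ and $m_2(G) \le \max\{m_2(G_1),1\}$ separately. The lower bound is essentially immediate; the upper bound follows from the definition of $m_2$ after a short case analysis.

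For the lower bound, monotonicity of $m_2$ under subgraph containment gives $m_2(G) \ge m_2(G_1)$. To obtain $m_2(G) \ge 1$, let $u_1 u_2$ be the bridging edge with $u_i \in V(G_i)$. Since $e(G_1) \ge 1$ and $G_1$ is connected, one has $v(G_1) \ge 2$ and the vertex $u_1$ has a neighbour $w$ in $G_1$; the subgraph $J$ on $\{u_1, w, u_2\}$ with edges $u_1 w$ and $u_1 u_2$ then witnesses $(e(J)-1)/(v(J)-2) = 1$.

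For the upper bound, set $m = \max\{m_2(G_1), 1\}$ and fix any $J \subseteq G$ with $v(J) \ge 3$; the goal is to show $e(J) - 1 \le m(v(J) - 2)$. Writing $J_i = J \cap G_i$ and letting $\epsilon \in \{0,1\}$ indicate whether the bridging edge lies in $J$, one has $e(J) = e(J_1) + e(J_2) + \epsilon$ and $v(J) = v(J_1) + v(J_2)$. Two estimates on each $J_i$ drive the argument: by the definition of $m_2$ together with the hypothesis $m_2(G_2) \le m_2(G_1) \le m$, one has $e(J_i) \le m(v(J_i) - 2) + 1$ whenever $v(J_i) \ge 3$; and trivially $e(J_i) \le 1$ whenever $v(J_i) \le 2$.

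A short case analysis on whether each $v(J_i)$ exceeds $2$ then finishes the proof. When both $v(J_i) \ge 3$, summing the estimates gives $e(J) - 1 \le m(v(J) - 4) + 1 + \epsilon$, and the slack of $2m$ absorbs $1 + \epsilon$ because $m \ge 1$. When $v(J_1), v(J_2) \le 2$, one has $v(J) \le 4$ and $e(J) \le 3$, so $(e(J)-1)/(v(J)-2) \le 1 \le m$ directly. The mixed case (say $v(J_1) \ge 3$ and $v(J_2) \le 2$) is the main obstacle, and the hypothesis $m \ge 1$ is again used in the bookkeeping: each unit of $v(J_2) \in \{1,2\}$ contributes $m$ to the right-hand side, which suffices to pay for the additional $+1$ contributions from $e(J_2)$ and from $\epsilon$ on the left. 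Combining the two bounds yields $m_2(G) = \max\{m_2(G_1), 1\}$.
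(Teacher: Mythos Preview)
Your argument is correct and follows essentially the same case analysis as the paper: split according to whether each of $v(J\cap G_1)$ and $v(J\cap G_2)$ is at least~$3$, and use $m\ge1$ to absorb the at most two extra edges coming from the small side and the bridge. One small slip in the write-up: in the case $v(J_1),v(J_2)\le 2$, the bounds $v(J)\le 4$ and $e(J)\le 3$ alone do not give $(e(J)-1)/(v(J)-2)\le 1$ (they would allow $v(J)=3$, $e(J)=3$); the clean fix, which the paper uses, is to note that such a $J$ is acyclic, so $e(J)\le v(J)-1$.
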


The proof of Proposition~\ref{prop:density} is straightforward and is
postponed to Appendix~\ref{sec:smallcases}.

To prove the $1$-statement of Theorem~\ref{thm:main}~\ref{eq:item1},
we shall use Theorem~\ref{thm:property_Q} to prove that, for every
connected graph $H$, and every fixed tree $T$, we have $G(n,p) \mr (H, T)$ with high probability as long as $p \gg n^{-1/m_2(H)}$.
For this purpose, we will consider the complete $d$-ary tree of height~$h$, for general $h$ and $d$, which we denote by~$T(d,h)$.

\begin{theorem}\label{thm:trees} Let $H$ be a connected graph and let
  $d$ and $h$ be positive integers. If $p \gg n^{-1/m_2(H)}$, then
  $G(n,p) \mr (H, T(d,h))$ with high probability.
\end{theorem}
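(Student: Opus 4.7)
The plan is to prove Theorem~\ref{thm:trees} by induction on the height~$h$. To make the induction work, we strengthen the statement as follows: for appropriate constants $b_h>0$ and $r_h\in\NN$, with high probability $G=G(n,p)$ satisfies that every edge colouring~$\chi$ of~$G$ without a monochromatic copy of~$H$ has the property that, for every $X\subset V(G)$ with $|X|\geq b_hn$ and every set of colours~$C$ with $|C|\leq r_h$, $G[X]$ contains a rainbow copy of $T(d,h)$ whose edge colours all lie outside~$C$. Theorem~\ref{thm:trees} is then the special case $X=V(G)$, $C=\emptyset$.

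The base case $h=1$ is immediate from Theorem~\ref{thm:property_Q}: choosing the colour-degree threshold larger than $r_1+d$, we find $v\in X$ with $d_\chi(v,X)>r_1+d$, which exhibits at least~$d$ colours outside~$C$ and so yields a rainbow $K_{1,d}=T(d,1)$. For the inductive step, given~$X$ and~$C$, we apply Theorem~\ref{thm:property_Q} with a sufficiently large colour-degree threshold to find a root $v\in X$ with $d_\chi(v,X)$ vastly exceeding $|C|+d$; we pick~$d$ distinct colours $\alpha_1,\dots,\alpha_d\notin C$ together with corresponding neighbours $v_1,\dots,v_d\in X$ to serve as~$v$'s children. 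The $d$ subtrees are built sequentially: at step~$i$ we invoke the inductive hypothesis for~$h-1$ on the residual set $X_i=X\setminus(\{v\}\cup\bigcup_{j<i}V(T_j))$ with accumulated forbidden colour set $C_i=C\cup\{\alpha_1,\dots,\alpha_d\}\cup\bigcup_{j<i}\chi(E(T_j))$, and join the resulting rainbow $T(d,h-1)$ to~$v$ through the edge~$vv_i$. Choosing $r_{h-1}\geq r_h+d+(d-1)\,e(T(d,h-1))$ and $b_{h-1}\leq b_h$ closes the recursion for the constants.

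The main obstacle is that the hypothesis as stated produces an uncontrolled rainbow $T(d,h-1)$ in~$G[X_i]$, whereas the construction requires it to be rooted at the prescribed child~$v_i$. To overcome this, we further strengthen the hypothesis to specify the root: every vertex of sufficiently large colour-degree in the available set serves as the root of a rainbow $T(d,h-1)$ avoiding the forbidden colours. Making such vertices usable as children of~$v$ then requires each~$v_i$ itself to have large colour-degree in~$X_i$, which we ensure through a bootstrap of property~$\Q$: if in some subset of size at least~$b_hn$ the ``bad'' set of vertices of low colour-degree itself had size at least~$b_hn$, applying~$\Q$ to the bad set would produce a vertex of high colour-degree inside it, contradicting the definition. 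Hence bad vertices form a vanishing fraction, and a careful choice of constants lets us pick~$d$ suitable children among~$v$'s distinct-coloured neighbours to complete the recursion.
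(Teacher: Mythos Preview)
Your overall plan—reduce to property~$\Q$ and induct on~$h$—matches the paper's, and your first strengthening (a rainbow $T(d,h)$ in every large $X$, avoiding a bounded set~$C$ of colours) is reasonable. The gap is in your fix for the rooting obstacle. Your ``rooted'' strengthening—\emph{every} vertex $v$ with $d_\chi(v,X)>R_h$ is the root of a rainbow $T(d,h)$ in $G[X]$ avoiding~$C$—is not a consequence of property~$\Q$, and your bootstrap does not establish it. The bootstrap correctly shows that the bad set $B=\{u\in X:d_\chi(u,X)\le R_{h-1}\}$ has fewer than $bn$ vertices, but the prescribed root~$v$ is only guaranteed a \emph{constant} number $R_h$ of distinct-coloured neighbours, and nothing prevents all of those neighbours from lying in~$B$. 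Indeed, take any $G_0$ satisfying $\Q(b,r,H)$ with an $H$-free colouring, adjoin a fresh vertex~$v$ together with $R$ pendant leaves via edges of $R$ new colours: the resulting graph still satisfies $\Q(b,r,H)$ and has no monochromatic~$H$, yet $v$ has colour-degree~$R$ while every neighbour of~$v$ has colour-degree~$1$, so $v$ is not the root of any $T(d,h)$ with $h\ge2$. Thus the rooted strengthening fails in general, and ``a careful choice of constants'' cannot rescue it.

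The paper sidesteps the rooting problem entirely by strengthening in a different direction: it proves that under~$\Q$ one obtains not one rainbow $T(d,h)$ but a \emph{linear} family of vertex-disjoint ones. The induction then runs bottom-up rather than top-down. First use the base case with a much larger branching factor $d'=2d^h$ to find $\Theta(n)$ vertex-disjoint rainbow stars $T(d',1)$; let $X$ be the set of their centres. Since $|X|=\Theta(n)$, the induction hypothesis applied to $G[X]$ yields $\Theta(n)$ vertex-disjoint rainbow copies of $T(d,h-1)$ inside~$X$. Each leaf of such a $T(d,h-1)$ is, by construction, the centre of a rainbow star with $2d^h$ edges, so after discarding the at most $d^h$ colours already used in that $T(d,h-1)$ there remain $d^h$ edges at each leaf available for a greedy extension to a rainbow $T(d,h)$. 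The point is that by recursing on the \emph{centres} of pre-built stars, the leaves of the smaller trees are automatically equipped with the edges needed for extension—no control over a prescribed root is ever required.
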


Since Theorem~\ref{thm:property_Q} states that $G(n,p)$ satisfies
property $\Q(b, r, H)$ with high probability when $p \gg
n^{-1/m_2(H)}$, Theorem~\ref{thm:trees} follows directly from the
following deterministic lemma.

\begin{lemma}\label{lem:trees} Let $H$ be a connected graph. For all
  positive integers $d$ and $h$, there exist positive reals $b$, $c < 1$ and an
  integer $r \geq 1$ with the following property. If a graph $G$ satisfies
  $\Q(b, r, H)$, then in any edge colouring of $G$ either there is a
  monochromatic copy of $H$ or there are at least $\lfloor c \cdot
  v(G) \rfloor$
  vertex-disjoint copies of $T(d,h)$, each of them
  rainbow.

\end{lemma}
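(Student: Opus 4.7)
I would argue by induction on $h$. Write $n=v(G)$ and $E_h=e(T(d,h))$.

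For the base case $h=1$ (where $T(d,1)=K_{1,d}$), I take $b=1/2$, $r=\max\{d,2\}$ and $c=1/(2(d+1))$. The construction is a straightforward greedy: while the current vertex set $X$ has $|X|\geq n/2$, apply $\Q$ to find $v\in X$ with $d_\chi(v,X)>r\geq d$, pick $d$ edges from $v$ of pairwise distinct colours to form a rainbow $K_{1,d}$, and remove the $d+1$ vertices involved. The process halts once $|X|<n/2$, having produced at least $\lfloor n/(2(d+1))\rfloor$ trees.

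For the inductive step ($h\geq 2$), the plan is to invoke the inductive hypothesis at height $h-1$ with an \emph{inflated} branching factor $d'=2d+(d-1)E_{h-1}$. This yields constants $b',c',r'$ such that $\Q(b',r',H)$ guarantees $M\geq\lfloor c'n\rfloor$ vertex-disjoint rainbow copies $T_1,\dots,T_M$ of $T(d',h-1)$ with roots $\rho_1,\dots,\rho_M$. I set $b=\min\{b',c'/2\}$, $r=r'+d$ and $c=c'/(2(d+1))$, so that $\Q(b,r,H)$ implies $\Q(b',r',H)$. Writing $R=\{\rho_1,\dots,\rho_M\}$, I apply $\Q$ to $R$ itself (which has size at least $bn$) to find $v\in R$ with $d_\chi(v,R)>r\geq d$; I then pick $d$ edges from $v$ with distinct colours $\alpha_1,\dots,\alpha_d$ to $d$ distinct roots $\rho_{i_1},\dots,\rho_{i_d}$. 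Each subtree $T_{i_j}$ is then pruned to a rainbow copy of $T(d,h-1)$ rooted at $\rho_{i_j}$, the pruning selecting at every internal node of the original $T(d',h-1)$ a choice of $d$ of its $d'$ children whose incident edges avoid the current forbidden palette $\{\alpha_1,\dots,\alpha_d\}\cup\bigcup_{k<j}(\text{colours of the already-pruned }T_{i_k})$. Since this palette has size at most $d+(d-1)E_{h-1}=d'-d$, the required $d$ children are always available, and the pruned subtree inherits rainbowness from $T_{i_j}$. Joining $v$ to the $\rho_{i_j}$ by the $\alpha_j$-edges produces a rainbow $T(d,h)$, which consumes the $d+1$ trees $T_v,T_{i_1},\dots,T_{i_d}$. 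Iterating while $|R|\geq bn$ yields at least $(M-bn)/(d+1)\geq c'n/(2(d+1))=c\cdot n$ vertex-disjoint rainbow $T(d,h)$'s.

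The main obstacle will be calibrating $d'$ so that the pruning step always succeeds: the worst case is the last subtree $T_{i_d}$, which must dodge both the $d$ parent-child colours and the $(d-1)E_{h-1}$ colours of the previously pruned subtrees, forcing $d'\geq 2d+(d-1)E_{h-1}$. The remaining work is routine bookkeeping --- verifying the implication $\Q(b,r,H)\Rightarrow\Q(b',r',H)$, checking that $|R|\geq bn$ throughout the iteration (from the choice $b\leq c'/2$), and confirming vertex-disjointness of the produced trees (which follows because each new $T(d,h)$ consumes an entire fresh batch of $d+1$ pre-existing $T(d',h-1)$'s).
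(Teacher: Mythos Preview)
Your proof is correct. Both your argument and the paper's proceed by induction on $h$ with essentially the same base case, but the inductive steps are organised in opposite directions. The paper first finds many rainbow stars $T(2d^h,1)$ in $G$, then applies the induction hypothesis (with the \emph{same} $d$) to $G[X]$ where $X$ is the set of star-centres, obtaining many rainbow $T(d,h-1)$'s whose leaves are star-centres; each such tree is then extended \emph{downward} by greedily picking $d$ fresh-coloured edges from every leaf's star. You instead apply the induction hypothesis to all of $G$ with an \emph{inflated} branching factor $d'$, and then extend \emph{upward}: property~$\Q$ on the set of roots provides a new apex, and the excess branching $d'-d$ is spent on pruning the attached subtrees to dodge the connecting colours and the colours of previously pruned siblings.

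Both approaches are clean; the paper's avoids the pruning bookkeeping at the cost of invoking two separate inductive instances (the base case with $d'=2d^h$ and the hypothesis at height $h-1$), whereas yours invokes the hypothesis once but must calibrate $d'$ carefully so that the greedy colour-avoidance succeeds through all $d$ subtrees. Your computation $d'=2d+(d-1)E_{h-1}$ is exactly right for this. A minor remark: your floor arithmetic in the final count $(M-bn)/(d+1)\geq cn$ is slightly loose (you lose a $1$ from $\lfloor c'n\rfloor$), but this is harmless since one may assume $\lfloor cn\rfloor\geq 1$ and absorb the slack by halving $c$.
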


\begin{proof} Let $H$ be a connected graph. Our proof is by induction
  on $h$.  For $h = 1$, we take $b = 1/2$, $c = (2(d+1))^{-1}$ and
  $r=d-1$. Let $\chi$ be an edge colouring of an $n$-vertex graph $G$
  that satisfies $\Q(b, r, H)$ and suppose that there are no
  monochromatic copy of $H$ under~$\chi$.  Recall that $\Q(b, r, H)$
  implies that every subset $X \subset V(G)$ with $|X| \geq bn$
  contains a vertex incident to edges coloured with more than $r$
  colours in $X$.  Therefore, since $h = 1$, from the choice of $b$,
  $c$ and $r$, the definition of $\Q(b, r, H)$ allows us to
  iteratively find rainbow copies of $T(d, 1)$ until we have used more
  than $n/2$ vertices of $G$.  This procedure therefore finds $\lfloor
  n(2(d+1))^{-1}\rfloor\geq\lfloor cn\rfloor$
  disjoint rainbow copies of $T(d,1)$, as claimed.

  We now show that the result holds for $h > 1$. Let $b'$, $c'$ and
$r'$ be obtained by applying the base case $h' = 1$ with $d' :=
2d^{h}$. Thus, we have $b'=1/2$, $c'= (2(d'+1))^{-1}$ and $r'=d'-1$. Also, let $b''$, $c''$ and $r''$ be obtained by applying the
induction step with $h'' := h-1$ and $d'' := d$. We will show below that
the conclusion of the lemma holds for $b = b''c'/2$, $r = \max\{r',
r''\}$ and $c = c' c''/2$.

  Let $G$ be a graph satisfying $\Q(b, r, H)$, and observe that we may
assume $cn \geq 1$ because the conclusion of the lemma is
vacuous otherwise. As before, suppose $E(G)$ is coloured with no
monochromatic copy of $H$ under $\chi$. Since $b' > b$ and $r'\leq r$, the graph $G$
satisfies $\Q(b', r', H)$ and, from the base case, $G$ contains a
family $\mathcal{L}$ of at least $\lfloor c'n\rfloor\geq c'n/2$ rainbow vertex disjoint copies of $T(2d^{h}, 1)$. Let $X \subset
V(G)$ be the set of roots of such trees. Observe that, since
$b''|X|\geq b''c'n/2 = bn$ and $r'' \leq r$, the graph $G[X]$ satisfies
$\Q(b'', r'', H)$. Therefore, by the induction hypothesis, $G[X]$
contains a family $\mathcal{T}$ of $\lfloor  c'' \cdot v(G[X])\rfloor
\geq \lfloor cn\rfloor$ vertex disjoint rainbow rooted
copies of $T(d'',h'')$, i.e., copies of $T(d,h-1)$.

  Notice that, by the definition of $X$, each leaf $v$ of a tree $T \in
\mathcal{T}$ is the root of a tree $L_v \in \mathcal{L}$. Since $T$
has at most $d^{h}$ edges, there are $d^{h}$ edges in each $L_v$ whose
colours do not appear in $T$. A greedy procedure can then be used to
extend $T$ to a rainbow tree of height $h$, concluding the induction step
and the proof of the lemma.
\end{proof}

\subsection{$0$-statement}\label{subsec:nonshort_zerostmt}

Let $H_2$ be a forest with $e(H_2)\geq 3$. In this section we prove
the $0$-statement of Theorem~\ref{thm:main}~\ref{eq:item1}, i.e., if $p\ll n^{-1/m_2(H_1)}$, then the following holds
when $H_1$ is not a star forest, and also when $H_1$ is a
constellation with $\Delta(H_1)\geq 2$ and $H_2$ is not a short forest: with high
probability, there is
an edge colouring of $G(n,p)$ with no monochromatic copy of
$H_1$ and no rainbow copy of $H_2$.

We start by noticing that if $H_1$ is not a star forest, then the
result follows directly from the $0$-statement of the
R\"odl--Ruci\'nski theorem (Theorem~\ref{RR-original}) with $r = 2$ (recall that $H_2$ has at least three edges).
Thus, we may and shall assume that $H_1$ is a constellation with $\Delta(H_1)\geq 2$ and $H_2$ is not a short forest.
In fact, the proof we present below also works when $H_1$ is a matching.

Since $H_1$ is a forest with $v(H_1)\geq 3$, we have $m_2(H_1)=1$.
For $p\ll n^{-1/m_2(H_1)} = n^{-1}$, with high probability $G(n,p)$ is a forest.
Therefore, to obtain the aimed $0$-statement and finish the proof
of Theorem~\ref{thm:main}~\ref{eq:item1}, it is enough to prove Proposition~\ref{prop:provideColouring} below.
\begin{proposition}
\label{prop:provideColouring}
Let $H_1$ be a constellation and $H_2$ be a forest with $e(H_2)\geq 3$
that is not a short forest. Then, for any forest $F$ there is an edge
colouring of $F$ with no monochromatic copy of $H_1$ and no rainbow
copy of $H_2$.
\end{proposition}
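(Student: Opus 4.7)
The plan is to give an explicit colouring of $F$, splitting into two cases according to the structure of $H_2$. The key preliminary observation is that, since $H_2$ is a forest with $e(H_2) \geq 3$ that is not short, it has a component with at least three edges; such a tree-component contains either a copy of $K_{1,3}$ (when it has some vertex of degree at least~$3$) or a copy of $P_4$ (when its maximum degree is~$2$, in which case it is itself a path on at least four vertices). So $H_2$ contains $K_{1,3}$ or $P_4$ as a subgraph. In both cases the colouring will be designed so that \emph{every colour class is a single star of $F$}. Since a star has only one connected component while a constellation has at least two, no colour class can contain $H_1$, and the task reduces to ruling out rainbow copies of $K_{1,3}$ (first case) or of $P_4$ (second case).

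In the case $H_2 \supset K_{1,3}$, I would root each tree of $F$ arbitrarily, fix a family of pairwise distinct colours $\{c_u : u \in V(F)\}$, and colour each edge of $F$ by the colour of its parent endpoint (the endpoint closer to the root). Each colour class $c_u$ is then the star of edges from $u$ to its children in $F$. The verification is short: any copy of $K_{1,3}$ centred at $w$ has at most one edge going to the parent of $w$, so at least two of its edges go from $w$ to children of $w$ and thus share the colour $c_w$; in particular no copy of $K_{1,3}$, and hence no copy of $H_2$, is rainbow.

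In the complementary case $H_2 \not\supset K_{1,3}$ one has $\Delta(H_2) \leq 2$, so $H_2$ is a disjoint union of paths, and the non-short assumption forces some path-component to have at least three edges, giving $H_2 \supset P_4$. For each tree $T$ of $F$ I would fix a bipartition $V(T) = A_T \cup B_T$ and colour every edge $uv$ with $v \in B_T$ by a colour $c_v$, where the $c_v$ are pairwise distinct over $v \in V(F)$. Each colour class is then the star of all edges incident to some vertex of $B_T$. To rule out rainbow $P_4$, take any copy $v_1v_2v_3v_4$: its four vertices alternate between $A_T$ and $B_T$, so either $\{v_2,v_4\} \subset B_T$ (in which case $v_1v_2$ and $v_2v_3$ are both coloured $c_{v_2}$) or $\{v_1,v_3\} \subset B_T$ (in which case $v_2v_3$ and $v_3v_4$ are both coloured $c_{v_3}$); in either case two consecutive edges of the copy share a colour, so no $P_4$, and hence no $H_2 \supset P_4$, is rainbow.

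The main obstacle will be Case~2, since the rooted-tree colouring of Case~1 does admit rainbow $P_4$'s on ancestor chains; the trick is to instead exploit the bipartite structure of the forest $F$ and colour each edge by its endpoint in a fixed side of the bipartition, which forces any $P_4$ to have two consecutive edges meeting at that side and hence sharing a colour.
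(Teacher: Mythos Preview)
Your proof is correct and follows essentially the same approach as the paper: the paper also splits into the cases $\Delta(H_2)\geq3$ versus $H_2\supset P_4$, colours each edge by its parent endpoint in the first case (phrased there as $\chi(v_iv_j)=\min\{i,j\}$ for a depth-ordering) and by its odd-depth endpoint in the second, which is exactly your bipartition colouring. One tiny wording nit: saying ``a star has only one connected component while a constellation has at least two'' is not quite the right justification, since connected graphs can have disconnected subgraphs; the correct point (which you are clearly using) is that all edges of a star share the centre, so a star contains no two vertex-disjoint edges and hence no constellation.
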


\begin{proof}
  Let $H_1$ be a constellation and $H_2$ be a forest with $e(H_2)\geq 3$ that is not a short forest.
Let $F$ be an arbitrary $n$-vertex forest composed by trees $T_1,
\ldots, T_k$, rooted at arbitrary vertices.
Define $(v_1,\ldots, v_n)$ to be an ordering of $V(F)$ such that
the vertex depths are non-decreasing (that is, $\operatorname{depth}(v_i) > \operatorname{depth}(v_j)$
implies $i > j$).
In what follows, we construct a colouring $\chi\colon E(F)\to\mathbb{N}$ that contains no monochromatic copy of $H_1$ and no rainbow copy of $H_2$.

Since $H_2$ is not a short forest, either $\Delta(H_2)\geq 3$ or
$H_2$ contains a path with three edges. If $\Delta(H_2)\geq 3$, then
put $\chi(v_iv_j) = \min\{i, j\}$ for every edge $v_iv_j$.
The colouring $\chi$ clearly has no monochromatic copy of $H_1$ as there are no monochromatic vertex-disjoint stars with the same colour.
Also, since every vertex of $F$ is incident to edges coloured with at most two colours and $\Delta(H_2)\geq 3$, there is no rainbow copy of $H_2$ in $F$.
Finally, if $H_2$ contains a path with $3$ edges, then we colour every edge $e\in E(F)$ by setting $\chi(e) = i$, where $v_i$ is the unique vertex of $e$ at odd depth in the tree containing $e$.
As before, since there are no monochromatic vertex-disjoint stars with the same colour, $F$ contains no monochromatic copy of $H_1$ under $\chi$.
Furthermore, there is no rainbow path $v_{i_0}v_{i_1}v_{i_2}v_{i_3}$,
since either $v_{i_1}$ or $v_{i_2}$ would have odd depths and
therefore this path would have two edges with the same colour.
\end{proof}

\section{Threshold at $n^{-1/\mF(H_1,H_2)}$}
\label{sec:mf}

Let $H_1$ be a graph with $e(H_1)\geq 2$ and let $H_2$ be a forest
with $e(H_2)\geq 3$ and recall that, by definition, $\mF(H_1,H_2) = \inf\{m(F)\colon \text{$F$ is a
    forest and }F\mr(H_1,H_2)\}$.

Here we prove Theorem~\ref{thm:main}~\ref{eq:item2}, showing the threshold $n^{-1/\mF(H_1,H_2)}$ for $G(n,p)\mr(H_1,H_2)$ when $H_1$ is a star, and also when $H_1$ is a constellation and $H_2$ is a short forest.

Propositions~\ref{prop:star} and~\ref{prop:forest} below show the existence of a forest $F$ such that $F\mr(H_1,H_2)$ (see Corollary~\ref{cor:well}), which imply that the parameter $\mF(H_1,H_2)$ is well defined for such particular graphs $H_1$ and $H_2$.
This fact guarantees that the threshold for $G(n,p)\mr(H_1,H_2)$ is given by $n^{-1/\mF(H_1,H_2)}$.
In fact, since $F\mr(H_1,H_2)$, the $1$-statement follows from the fact that $F \subset G(n,p)$ with high probability (Theorem~\ref{thm:BB}) and the $0$-statement follows from a simple argument and the definition of $\mF(H_1,H_2)$ (details are given in the end of this section).

\begin{proposition}\label{prop:star} If $H_1$ is a star and $H_2$ is a
forest, then there exists a tree $T$ such that $T\mr(H_1,H_2)$.
\end{proposition}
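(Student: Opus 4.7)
The plan is to reduce to the case where $H_2$ is the complete $d$-ary tree $T(d,h)$, and then to take $T$ to be a complete tree of sufficiently large arity and height. Since $H_2$ is a forest, there exist integers $d, h \geq 1$ (for instance $d$ larger than the maximum degree of $H_2$ and $h$ at least the number of vertices of $H_2$) such that $H_2$ embeds as a subgraph of $T(d,h)$. Any rainbow copy of $T(d,h)$ contains a rainbow copy of $H_2$, so it suffices to exhibit a tree $T$ with $T\mr (K_{1,s}, T(d,h))$, where I write $H_1 = K_{1,s}$.

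The key observation is that in any edge colouring avoiding a monochromatic $K_{1,s}$, at each vertex each colour appears on at most $s-1$ incident edges, and hence a vertex of degree $D$ is incident to at least $\lceil D/(s-1)\rceil$ distinct colours. Set $N := e(T(d,h)) = d + d^2 + \cdots + d^h$, let $D := (s-1)(N+d)$, and take $T$ to be the complete $D$-ary tree $T(D,h)$. Given an edge colouring $\chi$ of $T$ with no monochromatic $K_{1,s}$, I shall construct a rainbow copy of $T(d,h)$ in $T$, rooted at the root of $T$, by a greedy level-by-level procedure: I maintain a partial rainbow subtree together with the set $C$ of colours already used on it, so that $|C|\leq N$ throughout. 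At each current leaf $v$ of this subtree at depth less than $h$, the $D$ edges from $v$ to its children in $T$ carry at least $\lceil D/(s-1)\rceil = N+d$ distinct colours, so after excluding the at most $N$ colours in $C$ there remain at least $d$ fresh colours; picking one child of $v$ per fresh colour extends the subtree with a new rainbow level at $v$.

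The main obstacle is only to verify that the parameters are coherent so that the greedy extension never fails; once $D$ and $h$ are chosen as above, the bookkeeping is routine and the procedure terminates after $h$ rounds with a rainbow $T(d,h)$ in $T$. Hence $T(D,h)\mr (K_{1,s}, T(d,h))$ and in particular $T(D,h)\mr (H_1, H_2)$, proving the proposition.
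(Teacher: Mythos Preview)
Your proof is correct and follows essentially the same approach as the paper: both reduce $H_2$ to a containing tree (the paper uses a spanning tree $H_2'$ of $H_2$, you use a complete $d$-ary tree $T(d,h)$), take $T$ to be a complete tree of sufficiently large arity and matching height, and then greedily embed level by level using the pigeonhole bound that forbidding a monochromatic $K_{1,s}$ forces many colours among the edges to the children of each vertex. The only difference is in the parameters---the paper's arity $(s-1)(\ell-1)+1$ with $\ell=e(H_2')$ is tighter than your $(s-1)(N+d)$---but this is immaterial for an existence statement.
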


\begin{proof}
 Let $H_1$ be a star with $s$ edges.
  Consider a tree $H_2'$ that contains $H_2$ as a spanning subgraph, i.e., $V(H_2') = V(H_2)$ and $E(H_2)
  \subset E(H_2')$, rooted at some arbitrary vertex $v$.
  Let $e(H_2') = \ell$, and let $h$ be the height of $H_2'$.
  We will show that a complete $((s-1)(\ell-1)+1)$-ary tree $T$ of height $h$ satisfies $T\mr(H_1,H_2')$, which implies $T\mr(H_1,H_2)$.

  Note that in any edge colouring avoiding a monochromatic copy of $H_1$
there are at most $s-1$ edges with any given colour at each vertex of
$T$. Thus, the edges from every non-leaf of $T$ to its children
must be coloured with at least $\ell$ different colours. Therefore, a
greedy embedding that starts by assigning the root $v$ of $H_2'$ to the root of $T$ and
always chooses edges of previously unused colours, level by level, will
succeed in finding a rainbow copy of $H_2'$ in any colouring of $T$ that
avoids a monochromatic copy of $H_1$.
\end{proof}

In the next proposition we consider the case where $H_1$ is a
constellation and $H_2$ is a short forest. Given a graph
$G$ with edges coloured by $\chi$ and $v\in V(G)$, recall that
$d_{\chi}(v)$ denotes the number of colours used on edges incident to $v$.

\begin{proposition}\label{prop:forest}
  If $H_1$ is a constellation and $H_2$ is a short forest, then there exists a tree $T$ such that $T\mr(H_1,H_2)$.
\end{proposition}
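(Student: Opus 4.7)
The plan is to exhibit $T$ explicitly as a sufficiently large two-level tree and argue by a pigeonhole/greedy embedding in the spirit of Proposition~\ref{prop:star}, with an extra case analysis to compensate for the weaker hypothesis imposed by avoiding a monochromatic constellation. Write $H_1 = K_{1,s_1} \sqcup \cdots \sqcup K_{1,s_k}$ with $k \geq 2$ components, put $s = \max_j s_j$, and write $H_2 = a\,K_2 \sqcup b\,P_3$, so that $\ell := e(H_2) = a + 2b$. I will take $T = T(d,2)$, the complete $d$-ary tree of height~$2$, with $d$ chosen large in terms of $s$, $k$, $a$ and $b$. Denote the root by $r$, its children by $v_1, \ldots, v_d$, and let $S_i$ be the star of the $d$ edges from $v_i$ to its grandchildren. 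Fix an edge colouring $\chi$ of $T$ with no monochromatic copy of $H_1$, and call a colour $c$ \emph{heavy} at $v_i$ if at least $t := \max(s,2)$ edges of $S_i$ have colour~$c$.

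The key structural observation I will use is that for every colour $c$ the set $\{i : c \text{ is heavy at } v_i\}$ has size at most $k-1$; otherwise, $k$ pairwise vertex-disjoint monochromatic copies of $K_{1,s}$ in colour $c$ would sit inside the corresponding stars $S_i$, producing a monochromatic copy of $H_1$ (using $K_{1,s} \supseteq K_{1,s_j}$ for every component of $H_1$). In particular, whenever no colour is heavy at $v_i$, every colour class of $\chi|_{S_i}$ contains at most $t-1$ edges, so $S_i$ uses at least $d/(t-1)$ distinct colours and is a rich source of vertex-disjoint rainbow cherries centred at~$v_i$.

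The rainbow copy of $H_2$ is then built greedily. I pick $b$ vertex-disjoint rainbow cherries centred at indices $v_{i_1}, \ldots, v_{i_b}$ where the stars $S_{i_j}$ display many colours, selecting at each~$v_{i_j}$ two child edges of previously unused colours. I then pick $a$ further disjoint rainbow single edges of fresh colours, for instance among the root-to-child edges $rv_j$ with $j \notin \{i_1, \ldots, i_b\}$. Because $d \gg k\cdot s \cdot (a+b)$, at every stage the forbidden colour set is vastly outnumbered by the available ones, so the procedure succeeds.

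The main difficulty will be the case in which many $v_i$'s carry a heavy colour and few have fully ``colour-diverse'' stars, since rainbow cherries at the $v_i$ layer are then scarcer. The global bound $|\{i : c \text{ heavy at } v_i\}| \leq k-1$ forces the heavy colours to spread over many indices, leaving enough $S_i$'s with at least two colour classes to still extract the required cherries. A parallel argument controls the root-to-child layer: a colour $c$ heavy at some $v_i$ and simultaneously appearing many times among the edges $rv_j$ with $v_j$ outside the heavy indices would again complete a monochromatic $H_1$, which both bounds colour concentration at $r$ and guarantees that the final $a$ rainbow single edges can be selected. Once this case analysis is in hand, the greedy selection produces the desired rainbow copy of $H_2$, proving $T \mr (H_1, H_2)$.
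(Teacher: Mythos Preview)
Your choice of $T=T(d,2)$ cannot work, no matter how large $d$ is. Here is a colouring that defeats it whenever $H_2$ contains at least two cherries. Give every edge $rv_i$ colour~$0$ and every edge of $S_i$ colour~$i$. Each colour class is then a single star, so there is no monochromatic constellation. On the other hand, any rainbow cherry in $T$ must be centred at some $v_i$ and must use the edge $rv_i$ (the edges at $r$ all have colour~$0$, and the edges of each $S_i$ all have colour~$i$), so every rainbow cherry uses colour~$0$ and no two of them together can form a rainbow $2P_3$. Hence $T(d,2)\mr(H_1,2P_3)$ fails for every constellation $H_1$ and every $d$.

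This is exactly the scenario your ``main difficulty'' paragraph tries to dismiss: every $S_i$ carries a heavy colour, your bound $|\{i:c\text{ heavy at }v_i\}|\le k-1$ is satisfied (each colour $i\ge1$ is heavy only at the single index~$i$), and yet \emph{no} $S_i$ has two colour classes, so there are no cherries to extract below level~$1$; your root-layer argument does not engage either, since colour~$0$ is heavy at no~$v_i$. The paper circumvents this obstruction by taking $T$ of height~$3$: it first locates, among the children of the root, many large monochromatic stars of pairwise distinct colours going down to level~$2$, and then repeats the same step inside the leaf sets of those stars to obtain a second family of monochromatic stars of fresh colours going down to level~$3$; each rainbow cherry is then built from one edge of the first family and one of the second, so the colours of different cherries can be chosen independently.
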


\begin{proof}
  Extending $H_1$ and $H_2$ if necessary, we may assume that $H_1$ has $s$ stars with $s$ edges each and that $H_2$ is composed by $s$ cherries.

  Let $d=6s^3+7s^2$.
  We prove that any complete $d$-ary tree $T$ of height $3$ satisfies $T\mr(H_1,H_2)$.
  Let $\chi$ be an arbitrary edge colouring of such a tree $T$ with no monochromatic copy of $H_1$, and let $X \subset V(T)$ be the set of vertices of $T$ with colour-degree at least $3s$.
  We may assume that $|X| < 3s$, for otherwise one can find a rainbow copy of the forest of cherries $H_2$ greedily.
  Assuming this, the following claim holds.

\begin{claim}
  Let $A$ be a set of internal vertices of $T$ such that $|A| \geq 2s^2
  + 3s$.
  One may find $2s$ vertex-disjoint monochromatic stars of different
  colours centred at vertices of $A$, each of them of size $2s^2+3s$.
\end{claim}

\begin{proof}
  Removing vertices from $A$ if necessary, we obtain a set $S$ such
  that $S \cap X = \emptyset$ and $|S| \geq 2s^2 + 1$.  Every $v \in
  S$ has $d_\chi(v) \leq 3s-1$, and therefore each $v \in S$ is the
  centre of a monochromatic star with $d/(3s-1) > 2s^2 + 3s$ edges (to
  children of $v$).
  These stars span at least $(2s^2 + 1)/(s-1) > 2s$ colours,
  since there is no monochromatic copy of $H_1$ in $T$.
\end{proof}

To finish the proof of the proposition, let $r$ be the root of $T$ and apply the claim to $N(r)$ to obtain a collection $\mathcal{S}$ of $2s$ monochromatic stars of different colours.
We will construct a rainbow family of cherries $C_1, \ldots, C_s$ inductively.
Having constructed cherries $C_1, \ldots, C_{i-1}$, for $i < s$,
choose a star $S_i \in \mathcal{S}$ whose colour is \emph{new} (that
is, it does not appear in any of the previous $i-1$ cherries).
Let $v_i$ be the centre vertex of this star, and $W_i$ be the set of its leaves.
Applying the claim to $W_i$, we obtain a collection $\mathcal{S}_i$ of $2s$ monochromatic stars centred at vertices of $W_i$.
We may then choose a star $Z_i \in \mathcal{S}_i$ such that its colour is new and differs from $S_i$'s colour.
Any cherry containing $v_i$ and any edge of $Z_i$ is then a valid
choice for $C_i$. Thus, we found a rainbow copy of $H_2$, as desired.
\end{proof}

\begin{corollary}\label{cor:well}
  Let $H_1$ be a star and $H_2$ be a forest, or else let $H_1$ be a constellation and $H_2$ be a short forest. There is a
  forest $F$ such that $F\mr(H_1,H_2)$ and for any forest $F'$ with
  $m(F')<m(F)$ we do not have $F'\mr(H_1,H_2)$.
\end{corollary}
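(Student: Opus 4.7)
The plan is to leverage Propositions~\ref{prop:star} and~\ref{prop:forest} together with a discreteness observation about forest densities: the maximum density $m(F)$ of any forest~$F$ with at least one edge has the form $(k-1)/k$ for some integer $k\geq 2$. This forces the set of densities achievable by forests to be a well-ordered subset of $[0,1)$, so any non-empty subset of it bounded strictly below~$1$ has a minimum, and in particular the infimum defining $\mF(H_1,H_2)$ is automatically attained.

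First, under either hypothesis of the corollary, we are handed a tree $T^{\star}$ with $T^{\star}\mr(H_1,H_2)$: Proposition~\ref{prop:star} handles the case in which $H_1$ is a star, while Proposition~\ref{prop:forest} handles the case in which $H_1$ is a constellation and $H_2$ is a short forest. In particular, the family $\mathcal{F}=\{F:F\text{ is a forest with }F\mr(H_1,H_2)\}$ is non-empty and contains $T^{\star}$, whose density $m(T^{\star})=(v(T^{\star})-1)/v(T^{\star})$ is strictly less than~$1$.

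Next, I would verify that the density of any forest takes the stated shape. Every subgraph $J$ of a forest is itself a forest satisfying $e(J)\leq v(J)-c(J)$, where $c(J)\geq 1$ is its number of components, hence $e(J)/v(J)\leq(v(J)-1)/v(J)$, with equality exactly when $J$ is a tree. Maximising over subgraphs with at least one edge, $m(F)$ equals $(k-1)/k$, where $k$ is the order of a largest non-trivial component of~$F$. Consequently, the set $D=\{m(F):F\in\mathcal{F}\}$ is contained in the \emph{finite} set $\{(k-1)/k:2\leq k\leq v(T^{\star})\}$, because $D\subset[0,m(T^{\star})]$.

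To conclude, I would pick any forest $F\in\mathcal{F}$ attaining $m(F)=\min D$; this minimum exists since $D$ is a non-empty subset of a finite set. Then $F\mr(H_1,H_2)$ by construction, and any forest $F'$ with $m(F')<m(F)=\min D$ satisfies $m(F')\notin D$, so $F'\notin\mathcal{F}$, i.e.\ $F'\not\mr(H_1,H_2)$. There is no genuine obstacle in this argument: the only substantive input is the discreteness of $\{m(F):F\text{ a forest}\}\subset\{0\}\cup\{(k-1)/k:k\geq 2\}$, which combined with $\mF(H_1,H_2)\leq m(T^{\star})<1$ automatically upgrades the infimum to a minimum.
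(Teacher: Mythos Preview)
Your approach is essentially the paper's: both arguments exploit that $m(F)=(k-1)/k$ where $k$ is the largest component size, so minimising $m(F)$ over $\mathcal F$ amounts to minimising $k$ over $\NN$, which the paper does explicitly via the parameter $v(H_1,H_2)$.

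There is one slip to correct. You assert that $D=\{m(F):F\in\mathcal F\}\subset[0,m(T^\star)]$ and hence that $D$ is finite. This is false: the property $F\mr(H_1,H_2)$ is monotone, so any tree $T\supset T^\star$ also lies in $\mathcal F$, and taking $v(T)\to\infty$ gives elements of $D$ arbitrarily close to~$1$. Fortunately your own first paragraph already contains the right fix: $\{0\}\cup\{(k-1)/k:k\geq2\}$ is well-ordered (it is order-isomorphic to $\NN$), so every non-empty subset of it has a minimum regardless of boundedness. Replace the finiteness claim by this well-ordering observation and the argument goes through unchanged.
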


\begin{proof}
  Let $v(H_1,H_2) = \min\{k\in \mathbb{N}\colon $ there is a forest
  $F$ with components of size at most $k$ such that $F\mr(H_1,H_2)\}$.
  In view of Propositions~\ref{prop:star}
  and~\ref{prop:forest}, the parameter $v(H_1,H_2)$ is well defined.
  By definition of $v(H_1, H_2)$, there exists a forest $F$ with
  components of size at most $v(H_1,H_2)$ such that $F\mr(H_1,H_2)$,
  and $F$ satisfies the desired conclusion.
\end{proof}

We now prove the main result of this section.

\begin{proof}[Proof of Theorem~\ref{thm:main}~\ref{eq:item2}]
  Let $H_1$ be a graph and $H_2$ be a forest with $e(H_1)\geq 2$ and $e(H_2)\geq 3$.
  Furthermore, let $H_1$ be a star and $H_2$ be a forest; or let $H_1$ be a constellation and $H_2$ be a short forest.
  From Corollary~\ref{cor:well}, there is a forest $F$ such that $m(F)
  = \mF(H_1,H_2)$. Let us denote by $k$ the size of the largest
  component of $F$, and observe that $m(F)=(k-1)/k$.

  Let $p\ll n^{-1/m(F)} \ll n^{-1}$ and let $G=G(n,p)$.
  Then, $G$ is a forest with high probability, and the expected number of copies of trees with $k$ vertices in $G$ tends to $0$ as $n$ tends to infinity.
  Therefore, from Markov's inequality, we conclude that every component of
  $G$ has fewer than $k$ vertices with high probability, and hence
  $m(G) < m(F)$. The infimum in the definition
  of $\mF(H_1,H_2)$ then implies the existence of a colouring of
  $G(n,p)$ containing no monochromatic copy of $H_1$ and no rainbow
  copy of $H_2$ with high probability.

  For $p \gg n^{-1/\mF(H_1, H_2)}$, the random graph $G(n,p)$ contains a copy of $F$ with high probability (Theorem~\ref{thm:BB}).
  Therefore, any colouring of $G(n,p)$ contains either a monochromatic copy of $H_1$ or a rainbow copy of $H_2$.
  This concludes the proof.
\end{proof}

\section{Open problems}
\label{sec:open}

Let $H_1$ be a graph and $H_2$ be a forest such that $e(H_1)\geq 2$
and $e(H_2)\geq 3$.
If $H_1$ is a star, then the proof of Proposition~\ref{prop:star} provides an
upper bound for $\mF(H_1,H_2)$ and the proof of Proposition~\ref{prop:forest}
gives an upper bound in the case where $H_1$ is a constellation and $H_2$ is a short forest.
One natural question is to ask for the exact value of $\mF(H_1,H_2)$
for these pairs of graphs $(H_1,H_2)$.
As a first step, in Appendix~\ref{sec:mfbounds} we estimate  $\mF(K_{1,3},H_2)$ when $H_2$ is a complete binary tree or a path.

We determine (apart from calculating the exact value of $\mF(H_1,H_2)$) the threshold
$\hat p(H_1,H_2)$ for $G(n,p)\mr (H_1,H_2)$ when $H_2$ is a forest.
The most relevant open question related to this problem is to
determine $\hat p(H_1,H_2)$ when $H_1$ is a star and $H_2$ is not a
forest, which is a generalisation of the so called anti-Ramsey problem (see~\cite{BaCaMoPa21+, KoKoMo14,
  KoKoMo18, KoMoPaSc21+, NePeSkSt17}).

\section{Acknowledgements}

The authors are grateful to Antonio K. B. Fernandes, Hugo
M. Vicente and Uriel A. S. Martínez for helpful discussions.

\appendix

\section{Proofs of Propositions~\ref{prop:main} and~\ref{prop:density}}
\label{sec:smallcases}
We start this short appendix by proving Proposition~\ref{prop:main},
which, in view of Theorem~\ref{thm:main}, completes the scenario when
$H_2$ is a forest.

\begin{proof}[Proof of Proposition~\ref{prop:main}]
  Recall that, in all cases, $H_1$ is a graph with $e(H_1)\geq 2$ and
  $H_2$ is a forest with $e(H_2)\geq 2$.

  We first prove Proposition~\ref{prop:main}~\ref{item:prop_i}, i.e.,
  we prove that if $H_1$ is a matching and $H_2$ is not a short forest, then $\hat p(H_1,H_2) = n^{-1}$.
  Indeed, for $p \ll n^{-1}$ we know that with high probability $G(n,p)$ is a forest and Proposition~\ref{prop:provideColouring} provides a colouring of any $n$-vertex forest with no monochromatic copy of $H_1$ and no rainbow copy of $H_2$.
  For $p \gg n^{-1}$, we just apply the $1$-statement of
  Theorem~\ref{thm:main}~\ref{eq:item1} to any tree containing
  $H_1$. This completes the proof of Proposition~\ref{prop:main}~\ref{eq:item3}.

  To prove Proposition~\ref{prop:main}~\ref{eq:item4}, we must
  consider the case where $H_1$ is an arbitrary graph and $H_2$ is a $2$-edge
  matching, or else $H_1$ is not a forest and $H_2$ is a cherry.
  First let $H_1$ be an arbitrary graph (with $e(H_1)\geq 2$) and let $p\ll n^{-1/m(H_1)}$.
  Note that from Theorem~\ref{thm:BB}, with high probability $G(n,p)$ contains no copy of $H_1$.
  If $H_2$ is a cherry or a $2$-edge matching, then any colouring with a single colour contains no rainbow copy of $H_2$.
  This finishes the proof of the $0$-statement for such graphs $H_1$ and $H_2$.

  For the $1$-statement of Proposition~\ref{prop:main}~\ref{eq:item4}, let $p \gg n^{-1/m(H_1)}$ and note that, again from Theorem~\ref{thm:BB}, with high probability $G(n,p)$ contains a copy of $H_1$.
  Since any colouring with a single colour contains a monochromatic copy of $H_1$, we assume that the edges of $G(n,p)$ are coloured with at least~$2$ colours.
  If $H_2$ is a $2$-edge matching, then for any two edges $e$ and $f$
  coloured with different colours, any edge disjoint from $e$ and $f$
  will complete a rainbow matching with one of $e$ or $f$, giving the desired rainbow copy of~$H_2$.
  Assume now that $H_1$ is not a forest and $H_2$ is a cherry.
  For this case we expose $G(n,p)$ in two rounds, considering $p_1,
  p_2 \gg n^{-1/m(H_1)}$ and $G(n,p) = G(n,p_1) \cup G(n, p_2)$. Let
  $G=G(n,p)$, $G_1=G(n,p_1)$ and $G_2=G(n,p_2)$.
  Note that since $H_1$ is not a forest, we have $m(H_1)\geq 1$, which implies $p_1\gg n^{-1}$.
  Then, with high probability, $G_1$ contains a component with at least $cn$
  vertices, for some positive $c$. Let $X$ be the vertex set of such a
  component.
Now note that since $p_2 \gg n^{-1/m(H_1)}$,
there is a copy of $H_1$ in $G_2[X]$ with high probability.
Therefore, if $G[X]$ is monochromatic we obtain a monochromatic copy
of $H_1$. Otherwise, we obtain a rainbow cherry, which concludes the
proof of Proposition~\ref{prop:main}~\ref{eq:item4}.

  It remains to prove Proposition~\ref{prop:main}~\ref{eq:item5},
  in which $H_1$ is a forest and $H_2$ is a cherry.
  Without loss of generality, we may assume that $H_1$ has no isolated
  vertices. For
  the $0$-statement one can check that for $p\ll n^{-v(H_1)/(v(H_1)-1)}$,
  with high probability every component of $G(n,p)$ is a tree with fewer than
  $v(H_1)$ vertices. Then, we may colour every component in a
  monochromatic way, using different colours for different components,
  avoiding monochromatic copies of $H_1$ and rainbow copies of $H_2$.
  For the $1$-statement, let $T$ be a tree on $v(H_1)$ vertices
  containing $H_1$ as a subgraph. If $p \gg
  n^{-v(H_1)/(v(H_1)-1)}$, then there is a copy of $T$ in
  $G(n,p)$ with high probability (Theorem~\ref{thm:BB}). If this copy
  is monochromatic, we have the desired copy of $H_1$. Otherwise,
  there is a rainbow cherry in $G(n,p)$, as desired.
\end{proof}

We conclude this section by proving Proposition~\ref{prop:density}.

\begin{proof}[Proof of Proposition~\ref{prop:density}]
  If $m_2(G_1) < 1$, then $G_1 = K_2$ and $G_2$ has at most~$2$ vertices.
  Therefore, $G$ is a forest with at least three vertices, from where we conclude that $m_2(G) = 1$ and hence we may assume that $m_2(G_1) \geq 1$.
  Let $H \subset G$ be an arbitrary subgraph of $G$ with at least three vertices.
  Our aim is to prove that $(e(H)-1)/(v(H)-2) \leq m_2(G_1)$, which implies that $m_2(G) \leq m_2(G_1)$.
  For that, let $A_i = V(G_i) \cap V(H)$ for $1\leq i\leq 2$.
  Note that we may also assume that $A_1$ and $A_2$ are both nonempty,
  as otherwise $H$ would be a subgraph of $G_1$ or $G_2$, which
  implies that $(e(H)-1)/(v(H)-2)\leq m_2(G_1)$.

  If $|A_1|, |A_2| \geq 3$, then using that $(a+b)/(c+d) \leq \max\{a/c, b/d\}$ for any $a, b \geq 0$ and $c, d > 0$, we have
  \begin{align*}
    \frac{e(H) - 1}{v(H) - 2} &\leq \frac{(e(G[A_1])-1) +
    (e(G[A_2])-1) + 2}{(|A_1|-2) + (|A_2|-2) + 2}\\
                              &\leq \max\{m_2(G[A_1]),m_2(G[A_2]),1\}\\
                                &\leq \max\{m_2(G_1), 1\}.
  \end{align*}
  If $|A_1|, |A_2| \leq 2$, then the
graph $H$ has no cycles and therefore $m_2(H) \leq 1$.
Moreover, if $|A_i| \geq 3$ and $|A_{3-i}| \in \{1,2\}$ for some $1 \leq i
\leq 2$, then $e(H) - e(H[A_i]) \leq |A_{3-i}|$. Therefore, by a
similar argument as before,
\[\frac{e(H) - 1}{v(H) - 2} = \frac{(e(H[A_i]) - 1) + (e(H) -
    e(H[A_i]))}{(|A_i| - 2) + |A_{3-i}|} \leq \max\{m_2(G_i), 1\},\]
thereby finishing the proof.
\end{proof}

\section{Bounds for $\mF(K_{1,3},H)$}
\label{sec:mfbounds}

 Let $H$ be a forest and recall that, by definition,
\[
  \mF(K_{1,3},H)= \inf\{m(F)\colon \text{$F$ is a forest and }F\mr(K_{1,3},H)\}.
\]
For completeness, we sketch the result obtained in~\cite{CoFeMoVi21},
which provides bounds on $\mF(K_{1,3},H)$ when $H$ is a path or a
complete binary tree.  Since the proof for paths and complete binary
trees are similar, in this appendix we consider only the case where
$H$ is a complete binary tree. Our aim is to prove the following
result.
\begin{theorem}\label{teo:arvore_binaria}
  Let $H$ be the complete binary tree of height $h$. Then,
  \begin{enumerate}[label=\rmlabel]
    \item \label{lower-mF} if $T$ is the largest tree of a forest $F$
      with $F\mr(K_{1,3},H)$, then $v(T)\geq 2^{\binom{h-1}{2}}$, and
    \item \label{upper-mF} there is a rooted tree $G$ with $2^{(1+o(1))\binom{h}{2}}$ vertices such that $G\mr(K_{1,3},H)$, where the term $o(1)$ denotes a function that tends to zero as $h$ tends to infinity.

    \end{enumerate}
\end{theorem}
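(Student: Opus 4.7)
My plan for Theorem~\ref{teo:arvore_binaria} is to treat the two parts via dual greedy arguments, with the lower bound requiring an inductive adversarial colouring.

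For the upper bound (ii), I would construct the explicit rooted tree $G$ of height $h$ whose root has $3$ children and each level-$j$ vertex (for $1\leq j\leq h-1$) has $a_j:=2^{j+2}-2$ children, and then verify $G\mr(K_{1,3},H)$ via a level-by-level greedy rainbow embedding. Fix any edge colouring $\chi$ of $G$ with no monochromatic $K_{1,3}$: at every vertex $v$, each colour appears on at most two edges, so $d_\chi(v)\geq\lceil\deg(v)/2\rceil$. Place the root of $H$ at the root of $G$; having embedded $k$ levels using $2^{k+1}-2$ pairwise distinctly coloured edges, at each frontier vertex---a level-$k$ vertex of $G$ of degree at least $2^{k+2}-1$---the colour-degree is at least $2^{k+1}$, exceeding the number of used colours by at least $2$, so I may select two children carrying new distinct colours. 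Iterating through $h$ levels produces a rainbow $H$, and a direct count gives $v(G) \leq 3\prod_{j=1}^{h-1}(2^{j+2}-2)\cdot(1+o(1)) = 2^{(h^2+3h)/2+O(1)} = 2^{(1+o(1))\binom{h}{2}}$.

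For the lower bound (i), since $H$ is connected, both rainbow copies of $H$ and monochromatic copies of $K_{1,3}$ in a colouring of a forest $F$ lie within a single component; hence $F\mr(K_{1,3},H)$ forces some component $T$ of $F$ to itself satisfy $T\mr(K_{1,3},H)$, and the largest component is at least as large as~$T$. It therefore suffices to show that every tree $T$ with $v(T)<2^{\binom{h-1}{2}}$ admits a $K_{1,3}$-free colouring with no rainbow $H$. I would proceed by induction on~$h$, using a strengthened inductive hypothesis that additionally controls rainbow copies rooted near a distinguished vertex. For the inductive step, root $T$ arbitrarily, decompose at the root~$r$ into subtrees $T_1,\ldots,T_d$, apply the (strengthened) inductive hypothesis at parameter $h-1$ to each subtree to obtain ``bad'' colourings of the $T_i$, and finally colour the root-edges using a small palette, carefully chosen to block rainbow copies of $H$ whose embedded root sits at~$r$ (such a copy would demand rainbow copies of the height-$(h-1)$ binary tree in two distinct subtrees) or whose embedded image crosses~$r$. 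The per-step factor $2^{h-2}=2^{\binom{h-1}{2}-\binom{h-2}{2}}$ matches the $2^{h-2}$-fold branching at depth $h-2$ of $H$ and should arise naturally from the case analysis.

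The main obstacle is this inductive step for the lower bound: blocking every possible embedded-root position for a rainbow $H$---including positions whose $H$-image crosses~$r$---while simultaneously maintaining the $K_{1,3}$-free constraint at~$r$. Choosing the right strengthened form of the inductive hypothesis so that it both survives induction and yields the product $\prod_{j=0}^{h-2}2^j=2^{\binom{h-1}{2}}$ is the delicate part. By contrast, the upper bound and the forest-to-tree reduction are comparatively routine.
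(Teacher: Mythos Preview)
Your upper bound~\ref{upper-mF} follows the same greedy level-by-level scheme as the paper, but your tree $G$ is slightly too small: when you write ``at each frontier vertex\dots the colour-degree is at least $2^{k+1}$, exceeding the number of used colours by at least $2$'', you are treating the ``used colours'' count as fixed at $2^{k+1}-2$ throughout level~$k$, but after you process the first frontier vertex the count rises to $2^{k+1}$, and at the second frontier vertex you have no guaranteed new colours left. You need the colour-degree at depth~$k$ to be at least $e(B_{k+1})=2^{k+2}-2$, not merely $e(B_k)+2$; taking $2^{k+3}$ children at depth~$k$ (as the paper does) fixes this at no asymptotic cost.

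Your lower bound~\ref{lower-mF} is where the real gap lies, and the paper's route is quite different from yours. Rather than an inductive colouring with an unspecified strengthened hypothesis, the paper gives a single explicit colouring rule for any rooted tree~$T$: at each vertex, order the children by decreasing number of descendants and colour the edges to them $1,2,\ldots$ in that order (a \emph{descendant colouring}). This is automatically $K_{1,3}$-free, and the key observation is that if $v$ is a child of~$u$ then $D_T(u)\geq 1+\chi(uv)\,D_T(v)$. Iterating along any root-to-leaf path of a rainbow embedded copy of~$H$ shows that $v(T)\geq f(H)$, where $f(H)$ is the minimum over injective colourings of~$H$ and choices of root~$s$ of the maximum product $\prod_{e\in P}\chi(e)$ over root-to-leaf paths~$P$. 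The bound $f(H)\geq 2^{\binom{h-1}{2}}$ then follows from an averaging/exchange argument on the $2^{h-1}$ root-to-leaf paths of one of the two principal subtrees. This sidesteps entirely the difficulty you identify --- tracking rainbow copies whose embedded root is far from~$r$ or whose image crosses~$r$ --- because the descendant colouring is defined globally and the analysis is in terms of a single path in the embedded copy, not a recursive decomposition of~$T$. Your inductive plan may be salvageable, but as written it is missing precisely the ingredient you flag as delicate, and the paper's approach shows that no such induction is needed.
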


Observe that, for any forest $F$, we have $m(F) = 1 - v(T)^{-1}$,
where $T$ is the largest tree of $F$.  Therefore, by unfolding
the definition of $\mF$, Theorem~\ref{teo:arvore_binaria} implies
that   \[
    \mF(K_{1,3}, H) = 1 - 2^{-(1+o(1))\binom{h}{2}}.
  \]

In what follows, we denote by $(H,r)$ a rooted tree which consists of
an unrooted tree $H$ and a root $r$. Given a rooted tree $T = (H, r)$
and $v, w \in V(H)$, we say $w$ is a \emph{descendant} of $v$ if the
path connecting $r$ and $w$ contains $v$.  Let $D_T(v)$ denote the
number of descendants of $v$ (including $v$).

\begin{definition}[descendant colouring]\label{def:colouring_by_weight}
Given a rooted tree $T$, a \emph{descendant colouring} of $T$ is a function $\chi \colon E(T) \to \mathbb{N}$ with the following property: for any vertex $u$, if $v_1, \ldots, v_k$ are the children of $u$, then $\{\chi(uv_1), \ldots, \chi(uv_k)\} = \{1, \ldots, k\}$ and $\chi(uv_i) < \chi(uv_j)$ for any $1 \leq i,j \leq k$ such that $D_T(v_i) > D_T(v_j)$.
\end{definition}

To colour a tree $T$ with a descendant colouring, we sort the children
of each vertex $u$ in descending order of number of descendants (arbitrarily breaking ties) and colour the edges between $u$ and its children in that order using colours from $1$ to $k$.

The following observation follows directly by noting that, given a child $u$ of a vertex $v$ in a rooted tree $T$, the vertex $u$ has at least $\chi(uv)$ children with $D_T(v)$ or more descendants.

\begin{observation}\label{obs:weight_consequence}
    Let $T$ be a rooted tree and $\chi$ be a descendant colouring of
    $T$. If $v$ is a child of $u$, then $D_T(u) \geq 1+ \chi(uv) D_T(v)$.
\end{observation}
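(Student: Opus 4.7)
The plan is to unfold Definition~\ref{def:colouring_by_weight} directly and pair it with the obvious partition of the descendant-set of $u$. First I would list the children $v_1,\ldots,v_k$ of $u$ in descending order of their descendant counts, using the same tie-breaking rule that $\chi$ uses when assigning colours, so that $\chi(uv_j)=j$ for each $1\le j\le k$. Writing $i:=\chi(uv)$, this labelling forces $v=v_i$, and moreover $D_T(v_j)\ge D_T(v_i)=D_T(v)$ for every $j\le i$, by the monotonicity condition built into the definition of a descendant colouring.

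Next I would use the fact that the descendants of $u$ are exactly $\{u\}$ together with the disjoint union of the descendant-sets of its children. Keeping only the contributions coming from $v_1,\ldots,v_i$ (which suffices for a lower bound) and invoking the inequalities of the previous paragraph, I get
\[
D_T(u)\;=\;1+\sum_{j=1}^{k}D_T(v_j)\;\ge\;1+\sum_{j=1}^{i}D_T(v_j)\;\ge\;1+i\cdot D_T(v)\;=\;1+\chi(uv)\,D_T(v),
\]
which is the claim.

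I do not expect any genuine obstacle: the entire content of the observation is bookkeeping from the ordering that $\chi$ imposes on sibling edges. The only small point worth being careful about is making sure the ordering of $v_1,\ldots,v_k$ chosen in the first step is consistent with the one implicit in the definition of $\chi$ (since $D_T$ need not be injective on siblings); this is handled by simply re-using the same tie-breaking convention.
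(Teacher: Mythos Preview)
Your proof is correct and matches the paper's approach exactly: the paper does not give a formal proof but merely remarks, just before the statement, that $u$ has at least $\chi(uv)$ children each with at least $D_T(v)$ descendants, which is precisely the content of your ordering-and-summing argument. Your extra care with tie-breaking is harmless but could be shortened by simply ordering the children by their colours $\chi(uv_j)=j$ directly, since the definition already forces this ordering to be non-increasing in $D_T$.
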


Note that descendant colourings avoid monochromatic copies of  $K_{1,3}$.
The above observation motivates the following definition. Given an unrooted tree $H$ and $s \in V(H)$, define $\mathcal{P}_s(H)$ as the set of all paths starting in $s$ and ending in a leaf of $H$, and let
\begin{equation}\label{eq:def_fH} f(H) = \min \Big\{ \max_{P \in \mathcal{P}_s(H)} \Big( \prod_{e \in P} \chi(e) \Big) \ \Big|\ \chi \colon E(H) \to \mathbb{Z}_{>0}\text{ is one-to-one and } s \in V(H)\Big\}.
\end{equation}
We have arrived at a key fact, which can be proved by repeatedly
applying Observation~\ref{obs:weight_consequence}.
\begin{lemma}\label{teo:cota_inferior}
Let $T$ be a rooted tree, $\chi$ be a descendant colouring of $T$ and
$H$ be an unrooted tree. If $\chi$ contains a rainbow copy of $H$, then $|V(T)| \ge f(H)$.
\end{lemma}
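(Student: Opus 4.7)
The plan is to iterate Observation~\ref{obs:weight_consequence} along a carefully chosen path inside the rainbow copy of $H$, and then to recognise the data so obtained as a feasible candidate in the infimum defining $f(H)$.

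First I would fix an embedding $\phi\colon V(H)\to V(T)$ realising the rainbow copy, and let $s\in\phi(V(H))$ be a vertex of minimum depth in $T$ among vertices of the copy. The opening step is to show that every vertex of the copy is a descendant of $s$ in $T$: for any such vertex $u$, the unique path in $T$ connecting $u$ to $s$ lies within the copy (which, being a subgraph of the tree $T$, is itself a subtree), and its topmost vertex cannot have depth smaller than $s$ by the choice of $s$, so it must equal $s$ itself, exhibiting $u$ as a descendant of $s$.

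Having secured that the copy hangs entirely below $s$, for any leaf $\ell$ of $H$ the path from $\phi^{-1}(s)$ to $\ell$ in $H$ corresponds under $\phi$ to the unique path in $T$ from the ancestor $s$ to the descendant $\phi(\ell)$, which is monotone downward. Writing it as $s=v_0,v_1,\dots,v_k=\phi(\ell)$ with each $v_{i+1}$ a child of $v_i$, I would iterate Observation~\ref{obs:weight_consequence} to obtain
\[
  D_T(s)\ \geq\ \chi(v_0v_1)\,D_T(v_1)\ \geq\ \cdots\ \geq\ \prod_{i=0}^{k-1}\chi(v_iv_{i+1}).
\]
Since this inequality holds for every leaf of $H$, the number of descendants of $s$ dominates the maximum of $\prod_{e\in P}\chi(e)$ taken over all paths $P\in\mathcal{P}_{\phi^{-1}(s)}(H)$.

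Finally, because the copy is rainbow, the pullback of $\chi$ along $\phi$ is a one-to-one map $E(H)\to\mathbb{Z}_{>0}$, and together with the distinguished vertex $\phi^{-1}(s)$ it forms a valid candidate in the infimum defining $f(H)$ in~\eqref{eq:def_fH}. This yields $|V(T)|\geq D_T(s)\geq f(H)$, as required. I expect the main obstacle to be the first step---justifying that the entire rainbow copy lies below $s$ in the rooted structure of $T$---since once this descendant layout is in place, the remainder is a routine telescoping of Observation~\ref{obs:weight_consequence} combined with the definition of $f(H)$.
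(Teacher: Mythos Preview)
Your proposal is correct and follows essentially the same argument as the paper: choose the vertex of the copy of minimum depth in $T$, observe that every root-to-leaf path in $H$ then maps to a monotone descending path in $T$, telescope Observation~\ref{obs:weight_consequence} along it, and recognise the rainbow colouring together with the chosen root as a feasible candidate in the definition of $f(H)$. The only difference is cosmetic: the paper names $s$ as a vertex of $H$ rather than of $T$, and dispatches your ``main obstacle'' in a single clause (``By the choice of $s$, the vertex $w_i$ is a child of $w_{i-1}$''), whereas you spell out why the image of the embedding, being a subtree of $T$, forces every vertex of the copy to be a descendant of the minimum-depth vertex.
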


\begin{proof}
 Let $\varphi \colon V(H) \to V(T)$ be the immersion of a rainbow copy
 of $H$ in $T$, and let $s\in V(H)$ be the vertex that minimises
 $\operatorname{depth}(\varphi(s))$. We have from the definition of $f$ that
 \begin{equation}\label{eq:cota_inferior_copia}
 \max_{P \in \mathcal{P}_s(H)} \Big(\prod_{e\in P}\chi(e)\Big) \geq f(H).
 \end{equation}
 Let $P$ be an arbitrary path of $\mathcal{P}_s(H)$, write $P =
 v_0v_{1}\dots v_{k}$ with $v_0 = s$, and set $w_i = \varphi(v_i)$.
 By the choice of $s$, the vertex $w_i$ is a child of $w_{i-1}$ for every $1 \leq
 i \leq k$. Applying Observation~\ref{obs:weight_consequence} several times, we get that $D_T(\varphi(s)) = D_T(w_0) \geq (\prod_{i=1}^k \chi(w_{i-1 }w_i))\cdot D_T(w_k)$. Since $P$ was arbitrary and $D_T(w) \geq 1$ for all $w \in T$, we have that
 \[ |V(T)| \geq D_T(\varphi(s)) \geq \max_{P \in \mathcal{P}_s(H)} \Big( \prod_{i=1}^k \chi(w_{i-1}w_i ) \Big). \]
 Together with~\eqref{eq:cota_inferior_copia}, this completes the proof of the lemma.
\end{proof}

We will use Lemma~\ref{teo:cota_inferior} together with the following
result to obtain~\eqref{lower-mF}, which gives the lower bound for
$\mF(K_{1,3}, H)$ in the proof of Theorem~\ref{teo:arvore_binaria}.

\begin{lemma}\label{lem:f_geq_bin}
Let $B_{h+1} = (H, r)$ be the complete binary tree of height $h+1$. Then, \[f(H) \geq 2^{\binom{h}{2}}.\]
\end{lemma}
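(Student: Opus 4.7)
We argue by induction on $h$. The base cases $h \leq 2$ are immediate since $2^{\binom{h}{2}} = 1$ and any product of distinct positive integers is at least $1$. For the inductive step, fix a one-to-one $\chi \colon E(H) \to \mathbb{Z}_{>0}$ and $s \in V(H)$; root $H$ at $s$ to obtain a rooted tree $T$. Let $r$ denote the original root of $H = B_{h+1}$ and set $d := d_H(s, r)$.

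First I would isolate the following structural observation: the child $b$ of $r$ in $H$ not lying on the $s$-to-$r$ path (either child if $s=r$) has, as its $T$-subtree, an exact copy of $B_h$. By the inductive hypothesis applied to this copy with $b$ as the starting vertex, there is a path from $b$ to a leaf of $H$ inside that subtree with product at least $2^{\binom{h-1}{2}}$. Prepending the $s$-to-$b$ spine---whose $d+1$ distinct labels contribute a product of at least $(d+1)!$---yields a path from $s$ of product at least $(d+1)!\cdot 2^{\binom{h-1}{2}}$. Since $2^{\binom{h}{2}} = 2^{h-1}\cdot 2^{\binom{h-1}{2}}$ and $h! \geq 2^{h-1}$ for every $h \geq 1$, this settles the range $d \geq h-1$ outright.

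The delicate regime is $d \leq h-2$ (in particular $s = r$), where the spine is too short. Here, however, $s$'s own original children in $H$ root subtrees isomorphic to $B_{h-d}$ of height $\geq 2$, and more generally each sibling $w_i$ along the spine roots an original copy of $B_{h+i-d}$ for $i \in \{1, \ldots, d\}$; applying the inductive hypothesis to each of these descendant $B$-trees produces $\Theta(d)$ competing candidate paths from $s$, all with distinct labels (because $\chi$ is one-to-one). Concretely I would execute this step via a geometric-mean bound: with $\pi_\ell = \prod_{e \in P_\ell}\chi(e)$ and $n_e$ denoting the number of leaves of $H$ whose $s$-to-leaf path in $T$ uses $e$,
\[
  \log_2 \max_\ell \pi_\ell \;\geq\; \frac{1}{2^{h+1}}\sum_{e \in E(H)} n_e \log_2 \chi(e),
\]
and then apply the rearrangement inequality (assign the $i$-th smallest label to the edge with the $i$-th largest $n_e$) together with a dyadic estimate $\log_2 i \geq \lfloor \log_2 i \rfloor$ to pin the right-hand side down at $\binom{h}{2}$ uniformly in $s$.

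The hardest part is precisely this small-$d$ case: the candidate paths share initial spine edges and compete for the same small labels, so the combinatorial bookkeeping---whether via the explicit $\Theta(d)$-option analysis or via the rearrangement bound on the $n_e$-weighted log-sum---must balance $\Theta(h)$ constraints simultaneously. I expect the geometric-mean route above to be the cleanest way to handle the estimate uniformly in $s$, since it sidesteps the case distinction according to where $s$ sits in $H$.
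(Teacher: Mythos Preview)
Your proposal correctly identifies that a geometric-mean/rearrangement argument is the right tool, and the inductive part for $d \geq h-1$ is sound. However, there is a genuine gap in the crucial $d \leq h-2$ regime. You propose averaging $\log_2 \pi_\ell$ over all $2^{h+1}$ leaves of the full tree $H$ with the given starting vertex $s$, and then assert that the rearrangement bound yields $\binom{h}{2}$ ``uniformly in $s$''. But you never verify this: for general $s$ the multiplicities $n_e$ are highly asymmetric (they depend on where $s$ sits relative to $r$), and it is not clear that the floor estimate you suggest suffices to pin the weighted sum at $2^{h+1}\binom{h}{2}$ in every case. Your own inductive detour is evidence of the difficulty---it handles large $d$ precisely because the averaging case you leave unfinished is harder there. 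In short, the proposal stops exactly at the point where work is required.

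The paper bypasses this difficulty with one structural observation that you are missing: for \emph{any} $s$, one of the two subtrees $T_1$, $T_2$ (the copies of $B_h$ rooted at the children of $r$) lies entirely on the far side of $r$ from $s$, so every $s$-to-$\ell$ path with $\ell$ a leaf of $T_1$ \emph{contains} the $r_1$-to-$\ell$ path inside $T_1$. Hence it suffices to lower-bound $\max_{P \in \mathcal P_{r_1}(H_1)} \prod_{e\in P}\chi(e)$, which reduces the problem---uniformly in $s$, with no case split on $d$ and no induction---to the single situation of a rooted $B_h$ with its root as starting vertex. Now the $n_e$ depend only on depth, the rearrangement step becomes a one-line exchange argument forcing $\chi(E_i)=\{2^i-1,\dots,2^{i+1}-2\}$, and the bound $\sum_{i=1}^h 2^i\cdot 2^{h-i}(i-1)=2^h\binom{h}{2}$ drops out immediately. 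This reduction to the ``far'' subtree is the key idea; once you have it, the entire inductive scaffolding and the $d$-dependent analysis become unnecessary.
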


\begin{proof}
Denote by $T_1 = (H_1, r_1)$ and $T_2 = (H_2, r_2)$ the copies of
$B_{h}$ whose roots are the children of $r$. Note that for any vertex $s \in V(H)$, there exists $i \in \{1,2\}$ such that the path (in $H$) from $s$ to any vertex of $T_i$ passes through $r$. We will treat the case where $i = 1$, since the other case is symmetric. From Observation~\ref{obs:weight_consequence}, every $P \in \mathcal{P}' := \mathcal{P}_{r_1}(H_1)$ is contained in some path of $\mathcal{P}_s(H)$. Thus, by monotonicity, it suffices to show that, for every one-to-one function
$\chi \colon E(T_1) \to \mathbb{N}$, we have
\begin{equation}\label{eq:max_cota_inferior}
  \max_{P \in \mathcal{P}'} \left(\prod_{e \in P} \chi(e)\right) \geq 2^{\binom{h}{2}}.
\end{equation}
Since $T_1$ has $2^h$ leaves, $|\mathcal{P}'| = 2^h$. Using that the maximum of a sequence of numbers is at least their average, it is sufficient to show that
\begin{equation}\label{eq:soma_cota_inferior}
\sum_{P \in \mathcal{P}'} \sum_{e\, \in\, P} \log_2 \chi(e) \geq 2^h \binom{h}{2}.
\end{equation}
Since $e(T_1) = 2^{h+1} - 2$, we may assume that $\chi(E(T_1)) = \{1,
\dots, 2^{h+1}-2\} $, as changing one colour to another of higher
value clearly does not decrease the sum. Let $E_i \subset E(T_1)$ be
the set of edges connecting a vertex at depth $i-1$ to a vertex of
depth $i$ in $T_1$. Since each edge of $E_i$ is on $2^{h-i}$ paths of
$\mathcal{P}'$, we have
\begin{equation}\label{eq:soma_por_niveis}
\sum_{P \in \mathcal{P}'} \sum_{e\, \in\, P} \log_2 \chi(e) = \sum_{i=1}^h \sum_{e\, \in \, E_i} 2^{h-i} \log_2 \chi(e).
\end{equation}
By an exchange argument, this sum is minimised if and only if, for every $1 \le i < j \le h$ and any pair $(e_i, e_j) \in E_i \times E_j$, we have $\chi(e_i) < \chi(e_j)$. Since $|E_i| = 2^i$, a colouring that minimises the right-hand side of~\eqref{eq:soma_por_niveis} satisfies $\chi(E_i) = \{2^i-1, \dots, 2^{i+1} - 2 \}$. Thus,
\begin{equation}\label{eq:cota_soma_por_niveis}
\sum_{i=1}^h \sum_{e\, \in\, E_i} 2^{h-i} \log_2 \chi(e) \geq \sum_{i=1}^h 2^i \cdot 2 ^{h-i} \log_2 (2^{i-1}) = 2^h \sum_{i=1}^h (i-1) = 2^h \binom{h}{2}.
\end{equation}
Combining~\eqref{eq:soma_por_niveis} and~\eqref{eq:cota_soma_por_niveis} we get~\eqref{eq:soma_cota_inferior} and therefore~\eqref{eq:max_cota_inferior}, which we have already argued that implies the desired inequality. This finishes the proof.
\end{proof}

We are ready to prove Theorem~\ref{teo:arvore_binaria}, which
implies $\mF(K_{1,3}, H) = 1 - 2^{-(1+o(1))\binom{h}{2}}$ as previously discussed.

\begin{proof}[Proof of Theorem~\ref{teo:arvore_binaria}]
  To prove item \ref{lower-mF}, we will relate
  Lemmas~\ref{teo:cota_inferior} and \ref{lem:f_geq_bin}.  Let $F$ be
  a forest such that $F\mr(K_{1,3},H)$.  We root each component of $F$
  arbitrarily and colour them with a descendant colouring.  Let $T$ be
  a component that contains a rainbow copy of $H$.  By
  Lemmas~\ref{teo:cota_inferior} and~\ref{lem:f_geq_bin} we have
  $|V(T)| \geq f(H) \geq 2^{\binom{h-1}{2}}$. Since $F$ was arbitrary, we get the desired
  lower bound.

  To obtain item \ref{upper-mF}, let $G$ be a rooted tree of
  height $h$ where, for $0 \leq i < h$, the vertices at depth $i$ have
  $2^{i+3}$ children. Such a tree has $2^{\binom{h}{2}+3h}$ leaves, and
  therefore $2^{(1 +o(1))\binom{h}{2}}$ vertices. Consider a colouring of
  $E(G)$ without a monochromatic copy of $K_{1,3}$, and let $B_h$
  denote the complete rooted binary tree of height $h$. We claim that it
  is possible to find a rainbow copy of $B_h$ by embedding its
  vertices into $G$ by first embedding $B_0$ into the root of $G$ and
  then extending $B_i$ to $B_{i+1}$ for $0 \leq i < h$. Indeed, since
  each vertex at depth $i$ of $G$ is incident to edges of at least
  $2^{i+2}$ colours and $B_{i+1}$ has $2^{i+2}-2$ edges, there is a
  colour available when embedding the children of vertices at depth
  $i$.
\end{proof}

Adapting the proof of Lemma~\ref{lem:f_geq_bin}, one can show that $f(P_d)
\geq \sqrt{d!}$, where $P_d$ is the path with $d$ edges.
Analogously to the proof of Theorem~\ref{teo:arvore_binaria},
combining such a bound with a construction proves that
$\mF(K_{1,3}, P_d) = 1-d^{-(1/2+o(1))d}$.
We omit the details.

\bibliography{bibliography}

\end{document}